\newtheorem{remark}{\textit{Remark}}[section]
\newtheorem{lemma}{\textit{Lemma}}[section]
\newtheorem{proposition}{\textit{Proposition}}[section]
\newtheorem{definition}{\textit{Definition}}[section]
\newmdtheoremenv{theo}{Theorem}
\title{A theory on the absence of spurious optimality}
\author{
  C. Josz , Y. Ouyang , R. Y. Zhang , J. Lavaei , S. Sojoudi
}
\begin{document}

\maketitle

\begin{abstract} 
We study the set of continuous functions that admit no spurious local optima (i.e. local minima that are not global minima) which we term \textit{global functions}. They satisfy various powerful properties for analyzing nonconvex and nonsmooth optimization problems. For instance, they satisfy a theorem akin to the fundamental uniform limit theorem in the analysis regarding continuous functions. Global functions are also endowed with useful properties regarding the composition of functions and change of variables. Using these new results, we show that a class of nonconvex and nonsmooth optimization problems arising in tensor decomposition applications are global functions. 
This is the first result concerning nonconvex methods for nonsmooth objective functions. 
Our result provides a theoretical guarantee for the widely-used $\ell_1$ norm to avoid outliers in nonconvex optimization.
\end{abstract}

\section{Introduction}
\label{sec:Introduction}

A recent branch of research in optimization and machine learning consists in proving that simple and practical algorithms can solve nonconvex optimization problems. Applications include, but are not limited to, neural networks \cite{mei2016,solta2017}, dictionary learning \cite{agarwal2014,agarwal2014bis}, deep learning \cite{lu2017,yun2018}, mixed linear regression \cite{yi2016,sedghi2016}, and phase retrieval \cite{vaswani2017,chen2018}.
In this paper, we focus our attention on matrix completion/sensing \cite{ge2016,ding2018,li2018} and tensor recovery/decomposition \cite{anandkumar2015,anandkumar2014,ge2016bis,jain2014}. Matrix completion/sensing aims to recover an unknown positive semidefinite matrix $M$ of known size $n$ and rank $r$ from a finite number of linear measurements modeled by the expression $\langle A_i , M \rangle := \text{trace}(A_i M), ~ i =1, \hdots,m,$ where the symmetric matrices $A_1,\hdots,A_m$ of size $n$ are known. It is assumed that the measurements contain noise which can modeled as $b_i := \langle A_i , M \rangle + \epsilon_i$ where $\epsilon_i$ is a realization of a random variable. When the noise is Gaussian, the maximum likelihood estimate of $M$ can be recast as the nonconvex optimization problem
\begin{equation}
\inf_{M \succcurlyeq 0} ~~~ \sum_{i=1}^m ~ \left( \langle A_i , M \rangle - b_i \right)^2 ~~~~ \text{subject to} ~~~~ \text{rank}(M) = r
\end{equation}
where $M \succcurlyeq 0$ stands for positive semidefinite.
One can remove the rank constraint and obtain a convex relaxation. It can then be solved via semidefinite programming after the reformulation of the objective function in a linear way. However, the computational complexity of the resulting problem is high, which makes it impractical for large-scale problems. A popular alternative is due to Burer and Monteiro \cite{burer2003,boumal2018}:
\begin{equation}
\inf_{X \in \mathbb{R}^{n \times r}} ~~~ \sum_{i=1}^m ~ \left( \langle A_i , X X^T \rangle - b_i \right)^2 
\label{eq:L2_general}
\end{equation}
This nonlinear \textit{Least-Squares} (LS) problem can be solved efficiently and on a large-scale with the Gauss-Newton method for instance. It has received a lot of attention recently due to the discovery in \cite{ge2016,bho2016} stating that the problem generally admits no spurious local minima (i.e. local minima that are not global minima). We raise the question of whether this also holds in the case of Laplacian noise, which is a better model to account for outliers in the data. The maximum likelihood estimate of $M$ can be converted to the \textit{Least-Absolute Value} (LAV)  optimization problem
\begin{equation}
\inf_{X \in \mathbb{R}^{n \times r}} ~~~ \sum_{i=1}^m ~ \left| \langle A_i , X X^T \rangle - b_i \right|.
\label{eq:L1_general}
\end{equation}
The nonlinear problem can be solved efficiently using nonconvex methods (for some recent work, see \cite{khamaru2018}).
For example, one may adopt the famous reformulation technique for converting $\ell_1$ norms to linear functions subject to linear inequalities to cast the above problem as a smooth nonconvex quadratically-constrained quadratic program \cite{boyd2009}. However, the analysis of this result has not been addressed in the literature - all ensuing papers (e.g. \cite{ge2017,zhu2018,balcan2017}) on matrix completion since the aforementioned discovery deal with smooth objective functions.

Consider $y \in \mathbb{R}^n$ and assume $r=1$. On the one hand, in the fully observable case with $M = y y^T$, the above nonconvex LS problem \eqref{eq:L2_general} consists in solving
\begin{equation}
\inf_{x \in \mathbb{R}^n}\sum\limits_{i,j=1}^n (x_i x_j - y_i y_j- \epsilon_{i,j})^2
\label{eq:L2noisy}
\end{equation}
for which there are no spurious local minima with high probability when $\epsilon_{i,j}$ are i.i.d. Gaussian variables \cite{ge2016}.
On the other hand, in the full observable case, the LAV problem \eqref{eq:L1_general} aims to solve
\begin{equation}
\inf_{x \in \mathbb{R}^n} \sum\limits_{i,j=1}^n |x_i x_j - y_i y_j- \epsilon_{i,j}|.
\label{eq:L1noisy}
\end{equation}

Although the LS problem has nice properties with Gaussian noise, we observe that stochastic gradient descent (SGD) fails to recover the matrix $M = y y^T$ in the presence of large but sparse noise. In contrast, SGD can perfectly recover the matrix by solving the LAV problem even when the sparse noise $\epsilon_{i,j}$ has a large amplitude.
Figures \ref{fig:n20} and \ref{fig:n50} show our experiments for $n = 20$ and $n=50$ with the number of noisy elements ranging from $0$ to $n^2$. See Appendix \ref{subsec:experiments} for our experiment settings.


\begin{figure}[!h]
\begin{subfigure}{.5\textwidth}
\centering
\includegraphics[width=6cm]{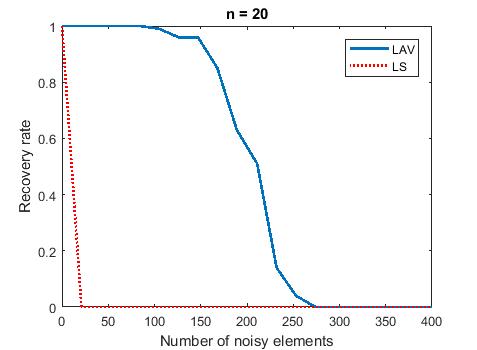}
	\caption{$n = 20$}
	\label{fig:n20}
\end{subfigure}
\begin{subfigure}{.5\textwidth}
\centering
\includegraphics[width=6cm]{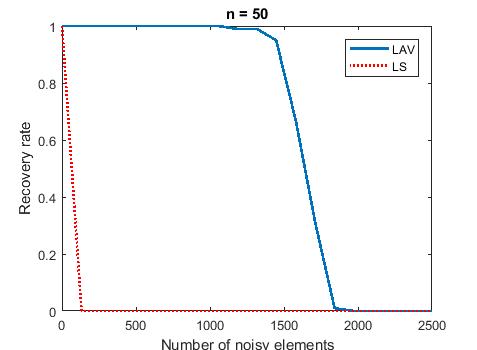}
	\caption{$n = 50$}
	\label{fig:n50}
\end{subfigure}
\caption{Experiments with sparse noise}
\end{figure}


Upon this LAV formulation hinges the potential of nonconvex methods to cope with sparse noise and with Laplacian noise. There is no result on the analysis of the local solutions of this nonsmooth problem in the literature even for the noiseless case. This could be due to the fact that the optimality conditions for the smooth reformulated version of this problem in the form of quadratically-constrained quadratic program are highly nonlinear and lead to an exponential number of scenarios. 
As such, the goal of this paper is to prove the following proposition, which as the reader will see, is a significant hurdle. It addresses the matrix noiseless case and more generally the case of a tensor of order $d \in \mathbb{N}$.
\begin{proposition}
\label{prop:L1_strong}
The function $f_{1}:  \mathbb{R}^n  \longrightarrow \mathbb{R} $ defined as 
\begin{equation}
f_{1}(x) := \sum\limits_{i_1,\hdots,i_d=1}^n |x_{i_1} \hdots x_{i_d} - y_{i_1} \hdots y_{i_d}|
\end{equation}
has no spurious local minima.
\end{proposition}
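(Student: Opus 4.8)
The plan is to prove the contrapositive by strong induction on the number of variables $n$: if $x^{*}$ is a local minimum of $f_{1}$, then $f_{1}(x^{*})=0$. Two preliminary reductions simplify the data $y$. First, the terms of $f_{1}$ whose index tuple contains some $j$ with $y_{j}=0$ sum to $\bigl(\sum_{k}|x_{k}|\bigr)^{d}-\bigl(\sum_{k:y_{k}\neq 0}|x_{k}|\bigr)^{d}$; this is nonnegative, strictly increasing in $\sum_{j:y_{j}=0}|x_{j}|$, and does not involve the remaining terms, so any local minimum (and, by inspecting the entry $x_{j}^{d}-y_{j}^{d}$, any global minimum) has $x_{j}=0$ whenever $y_{j}=0$, and the whole problem restricts to the support of $y$. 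Hence I may assume $y_{j}\neq 0$ for all $j$. Second, applying $x_{j}\mapsto -x_{j}$, $y_{j}\mapsto -y_{j}$ for the indices with $y_{j}<0$ leaves every term of $f_{1}$ unchanged and is a bijection of $\mathbb{R}^{n}$ preserving local and global minima, so I may further assume $y_{j}>0$ for all $j$.

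The base case $n=1$ is immediate: $f_{1}(x)=|x^{d}-y^{d}|$ is coercive, its critical points are the global minimizers together with $x=0$, and near $0$ one has $f_{1}(t)=y^{d}-t^{d}$, so $0$ is not a local minimum. For the inductive step, assume the proposition in dimension $<n$, take a local minimum $x^{*}$ with all $y_{j}>0$, and suppose first that some coordinate vanishes, say $x^{*}_{n}=0$. Restricting $f_{1}$ to $\{x_{n}=0\}$ gives the $(n-1)$-variable instance plus the constant $\sum_{\text{tuples containing }n}y_{i_{1}}\cdots y_{i_{d}}$, so $x^{*}_{<n}$ is a local, hence (by induction) a global, minimizer of that instance; thus $(x^{*}_{<n})^{\otimes d}=y_{<n}^{\otimes d}$, which forces $x^{*}_{<n}=y_{<n}$ (up to an overall sign when $d$ is even, treated the same way). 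But then $t\mapsto f_{1}(x^{*}_{<n},t)=\sum_{m=1}^{d}w_{m}\,|t^{m}-y_{n}^{m}|$ for constants $w_{m}>0$ with $w_{1}>0$, whose derivative at $t=0$ equals $-w_{1}<0$, contradicting minimality. So from now on every coordinate of $x^{*}$ is nonzero.

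There remains the main case: $y_{j}>0$ and $x^{*}_{j}\neq 0$ for all $j$, with $f_{1}(x^{*})>0$ assumed for contradiction. Write $T^{*}=(x^{*})^{\otimes d}-y^{\otimes d}$ and $f_{1}=\|\cdot\|_{1}\circ\Phi$ with $\Phi(x)=x^{\otimes d}-y^{\otimes d}$. Since $\|\cdot\|_{1}$ is convex and $\Phi$ is $C^{1}$, the condition $0\in\partial f_{1}(x^{*})$ produces a symmetric tensor $\Sigma$ with entries in $[-1,1]$, equal to $\operatorname{sign}(T^{*}_{I})$ on $\{I:T^{*}_{I}\neq 0\}$, and with $\Sigma\bullet(x^{*})^{\otimes(d-1)}=0$ (contraction of the last $d-1$ slots against $x^{*}$). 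Contracting once more against $x^{*}$ gives $\langle\Sigma,(x^{*})^{\otimes d}\rangle=0$, whence $\langle\Sigma,y^{\otimes d}\rangle=\langle\Sigma,(x^{*})^{\otimes d}\rangle-\langle\Sigma,T^{*}\rangle=-\|T^{*}\|_{1}=-f_{1}(x^{*})<0$, while every entry of $y^{\otimes d}$ is strictly positive.

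The main obstacle is converting this first-order information into a direction of strict decrease. The intended route is to study $f_{1}$ on the cell around $x^{*}$ obtained by freezing the signs $\epsilon_{I}=\operatorname{sign}(T^{*}_{I})$ of the inactive entries ($T^{*}_{I}\neq 0$): there $f_{1}$ agrees with $P-c$, where $P(x)=\langle\epsilon,x^{\otimes d}\rangle$ is homogeneous of degree $d$ and $c$ is a constant, plus the $|\cdot|$-contributions of the active entries ($T^{*}_{I}=0$). When no entry is active, $\nabla P(x^{*})=0$ and Euler's identity force $P(x^{*})=\langle\epsilon,(x^{*})^{\otimes d}\rangle=0$, so $P$ is a nonnegative homogeneous polynomial vanishing at $x^{*}\neq 0$; excluding this requires the rigid combinatorics of the sign tensor of the rank-one tensor $(x^{*})^{\otimes d}$ together with $\langle\epsilon,y^{\otimes d}\rangle<0$ and the positivity of $y^{\otimes d}$. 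When active entries are present, the decrease is second order and is not seen by the subgradient: already for $n=d=2$, on a sign cell where $T^{*}\leq 0$ entrywise one has $f_{1}(x)=(\sum_{j}y_{j})^{2}-(\sum_{j}x_{j})^{2}$, which has no interior local minimum, and on the boundary of such a cell the expansion of $f_{1}$ along the active directions contains a term $-t^{2}$. Carrying out this classification in general --- sorting the admissible sign patterns of a rank-one residual and, in each, exhibiting a coordinate perturbation, a rescaling, or a direction tangent to the active set along which $f_{1}$ strictly decreases --- is the heart of the argument; producing any such direction contradicts that $x^{*}$ is a local minimum and closes the induction.
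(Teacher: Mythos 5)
Your preliminary reductions (eliminating coordinates with $y_j=0$, flipping signs so that $y>0$, and ruling out local minima with a vanishing coordinate by induction on $n$) are sound, and the first-order information you extract at a putative spurious local minimum $x^*$ is correct: a sign tensor $\Sigma$ with entries in $[-1,1]$, matching $\operatorname{sign}(T^*)$ on the support of $T^*=(x^*)^{\otimes d}-y^{\otimes d}$, annihilated by contraction with $(x^*)^{\otimes(d-1)}$, whence $\langle\Sigma,y^{\otimes d}\rangle=-f_1(x^*)<0$. But the proof stops exactly where the difficulty begins. The last paragraph is a plan, not an argument: the ``classification of admissible sign patterns of a rank-one residual'' and the exhibition, in each case, of a perturbation along which $f_1$ strictly decreases are announced as ``the heart of the argument'' and carried out only for $n=d=2$. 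This is not a cosmetic omission. As the paper's remark following Lemma~\ref{lemma:clarke} points out, there is a whole positive-dimensional set of Clarke-stationary points (those with $\sum_i|y_i|x_i/y_i=0$ and all ratios $x_{i_1}\cdots x_{i_d}/(y_{i_1}\cdots y_{i_d})\leqslant 1$) that are not local minima, and at such points the decrease is of order $d$ in the perturbation and invisible to the subgradient; your single scalar inequality $\langle\Sigma,y^{\otimes d}\rangle<0$ is far too weak to produce descent there, and even in the ``no active entries'' subcase you explicitly defer the step that would exclude a nonnegative sign-cell polynomial vanishing at $x^*$. As written, the statement is therefore not established.

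For comparison, the paper deliberately avoids hunting for descent directions at stationary points. Lemma~\ref{lemma:clarke} squeezes much more out of stationarity than your contracted identity: by viewing the ratios $x_i/y_i$ as common roots of an increasing staircase (set-valued) function and relating roots to jump points, it derives the entrywise inequalities $x_{i_1}\cdots x_{i_d}\leqslant y_{i_1}\cdots y_{i_d}$ for all tuples. On the region $S$ cut out by these inequalities every absolute value resolves, so $f_1(x)=\bigl(\sum_i|y_i|\bigr)^d-\bigl(\sum_i|y_i|x_i/y_i\bigr)^d$, and Lemma~\ref{lemma:box} shows $f_1\in\mathcal{G}(S)$ by composing with a monotone function and changing variables (Propositions~\ref{prop:composition} and~\ref{prop:change}) so that the problem becomes a linear program (two linear programs glued via Proposition~\ref{prop:cover} when $d$ is even); Proposition~\ref{prop:subset} then combines the two lemmas, since all local minima lie in $S$ and $S$ contains the global minimizers. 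If you wish to salvage your approach, the most economical repair is to upgrade your first-order analysis to the entrywise bound of Lemma~\ref{lemma:clarke} and then treat the resulting region globally (e.g., by the LP reduction), rather than attempting a pointwise classification of sign cells around each stationary point.
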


\begin{remark}
While Proposition \ref{prop:L1_strong} is true, its proof relies on Lemma \ref{lemma:clarke} whose proof is invalid. The
error lies in between \eqref{eq:before_error} and \eqref{eq:after_error} where an explanation has been inserted. Note that the proof of
the slightly weaker statement that $f_1$ has no spurious strict local minima (i.e., Proposition \ref{prop:L1}) is
valid. The proof attempt of Proposition \ref{prop:L1_strong} does provide useful insights however, in particular by
highlighting the role of a certain staircase function \eqref{eq:root}. A corrected proof in the matrix case $d=2$
building on this idea is object of the recent preprint \cite{josz2024}.
\end{remark}

A direct consequence of Proposition \ref{prop:L1_strong} is that one can perform the rank-one tensor decomposition by minimizing the function in Proposition \ref{prop:L1_strong} using a local search algorithm. Whenever the algorithm reaches a local minimum, it is a globally optimal solution leading to the desired decomposition.

Existing proof techniques, e.g. \cite{ge2017,ge2016,ding2018,li2018,anandkumar2015,anandkumar2014,ge2016bis,jain2014},
are not directly useful for the analysis of the nonconvex and nonsmooth optimization problem stated above. The Clarke derivative \cite{clarke1975,clarke1990} provides valuable insight (see Lemma \ref{lemma:clarke}) but it is not conclusive. In order to pursue the proof (see Lemma \ref{lemma:box}), we propose the new notion of global function. 
Unlike the previous approaches, it does not require one to exhibit a direction of descent. After some successive transformations, we reduce the problem to a linear program. It is then obvious that there are no spurious local minima. Incidentally, global functions provide a far simpler and shorter proof to a slightly weaker result, that is to say, the absence of spurious strict local minima. It eschews the Clarke derivative all together and instead considers a sequence of converging differentiable functions that have no spurious local minima (see Proposition \ref{prop:L1}). In fact, this technique also applies if we substitute the $\ell_1$ norm with the $\ell_\infty$ norm (see Proposition \ref{prop:max}). 

The paper is organized as follows. Global functions are examined in Section \ref{sec:Notion of global function} and their application to tensor decomposition is discussed in Section \ref{sec:Application to tensor decomposition}. Section \ref{sec:Conclusion} concludes our work. The proofs may be found in the supplementary material (Section \ref{sec:Appendix} of the supplementary material).

\section{Notion of global function}
\label{sec:Notion of global function}

Given an integer $n$, consider the Euclidian space $\mathbb{R}^n$ 
with norm $\|x\|_2 := \sqrt{\sum\limits_{i=1}^n x_i^2}$
along with a subset $S \subset \mathbb{R}^n$. The next two definitions are classical.

\begin{definition}
\label{def:global minimum}
We say that $x \in S$ is a global minimum of $f : S \longrightarrow \mathbb{R}$ if for all $y \in S \setminus \{x\}$, it holds that $f(x) \leqslant f(y)$. 
\end{definition}

\begin{definition}
\label{def:local minimum}
We say that $x \in S$ is a local minimum (respectively, strict local minimum) of $f : S \longrightarrow \mathbb{R}$ if there exists $\epsilon>0$ such that for all $y \in S \setminus \{x\}$ satisfying $\|x - y \|_2 \leqslant \epsilon$, it holds that $f(x) \leqslant f(y)$ (respectively, $f(x) < f(y)$). 
\end{definition}

We introduce the notion of global functions below. 

\begin{definition}
\label{def:global function}
We say that $f : S \longrightarrow \mathbb{R}$ is a \textit{global function} if it is continuous and its local minima are all global minima. Define $\mathcal{G}(S)$ as the set of all global functions on $S$.
\end{definition}

In the following, we compare global functions with other classes of functions in the literature, particularly those that seek to generalize convex functions.

When the domain $S$ is convex, two important proper subsets of $\mathcal{G}(S)$ are the sets of convex and strict quasiconvex functions. Convex functions (respectively, strict quasiconvex \cite{definetti1949,fenchel1951}) are such that $f(\lambda x + (1-\lambda)y) \leqslant \lambda f(x) + (1-\lambda)f(y)$ (respectively, $f(\lambda x + (1-\lambda)y) < \max\{ f(x) , f(y) \}$) for all $x,y \in S$ (with $x \neq y$) and $0 < \lambda < 1$. To see why these are proper subsets, notice that the cosinus function on $[0,4\pi]$ is a global function that is neither convex nor strict quasiconvex. In dimension one, global and strict quasiconvex functions are very closely related. Indeed, when the domain is convex and compact (i.e. an interval $[a,b]$ where $a,b \in \mathbb{R}$), it can be shown that a function is strict quasiconvex if and only if it is global and has a unique global minimum. However, this is not true in higher dimensions, as can be seen in Figure \ref{fig:example} in Appendix \ref{subsec:eg}, or in the existing literature, i.e. in \cite{dunn1987} or in \cite[Figure 1.1.10]{bertsekas2016}. It is also not true in dimension one if we remove the assumption that the domain is compact (consider $f(x) := (x^2 + x^4) / (1+x^4)$ defined on $\mathbb{R}$ and illustrated in Figure \ref{fig:example} in Appendix \ref{subsec:eg}). 

When the domain $S$ is not necessarily convex, a proper subset of $\mathcal{G}(S)$ is the set of star-convex functions. 
For a star-convex function $f$, there exists $x \in S$ such that $f(\lambda x + (1-\lambda)y) \leqslant \lambda f(x) + (1-\lambda)f(y)$ for all $y \in S \setminus \{x\}$ and $0 < \lambda < 1$. Again, the cosinus function on $[0,4\pi]$ is a global function that is not star-convex. Another interesting proper subset of $\mathcal{G}(S)$ is the set of functions for which, informally, given any point, there exists a strictly strictly decreasing path from that point to a global minimum. This property is discussed in \cite[P.1]{venturi2018} (see also \cite{freeman2017}) to study the landscape of loss functions of neural networks. Formally, the property is that for all $x \in S$ such that $f(x) > \inf_{y \in S} f(y)$, there exists a continuous function $g: [0,1] \longrightarrow S$ such that $g(0) = x$, $g(1) \in \text{argmin} \{ f(y) ~|~  y \in S \}$, and $t \in [0,1] \longmapsto f(g(t))$ is strictly decreasing (i.e. $f(g(t_1)) > f(g(t_2))$ if $0 \leqslant t_1<t_2 \leqslant 1$). Not all global functions satisfy this property, as illustrated by the function in Figure \ref{fig:example}. For instance, there exists no strictly decreasing path from $x=-3$ to the global minimizer $0$. However, in the funtion in Figure \ref{fig:global} in Appendix \ref{subsec:eg}, there exists a strictly decreasing path from any point to the unique global minimizer. One could thus think that if $S$ is compact, or if $f$ is coercive, then one should always be able to find a strictly decreasing path. However, there need not exist a strictly decreasing path in general. Consider for example the function defined on $([-1,1]\setminus\{0\}) \times [-1,1]$ as follows
$$
f(x_1,x_2) ~ := ~ 
\left\{
\begin{array}{cl}
-4|x_1|^3(1-x_2)\left(\sin\left(-\frac{1}{|x_1|}\right)+1\right) & \text{if}~ \hphantom{....}0 \leqslant x_2 \leqslant 1, \\[.4cm]
\left\{ 12 |x_1|^3\left(\sin\left(-\frac{1}{|x_1|}\right)+1\right) - 2 \right\} x_2^3 ~+ &  \\[.4cm] 
\left\{ 20 |x_1|^3\left(\sin\left(-\frac{1}{|x_1|}\right)+1\right) - 3 \right\} x_2^2 ~+ & \text{if}~ -1 \leqslant x_2 < 0.\\[.4cm] 
4|x_1|^3\left(\sin\left(-\frac{1}{|x_1|}\right)+1\right) x_2 - 4|x_1|^3\left(\sin\left(-\frac{1}{|x_1|}\right)+1\right) &
\end{array}
\right.
$$

The function and its differential can readily be extended continuously to $[-1,1] \times [-1,1]$. This is illustrated in Figure \ref{fig:nopath} in Appendix \ref{subsec:eg}. This yields a smooth\footnote{In fact, one could make it infinitely differentiable by using the exponential function in the construction, but it is more cumbersome.} 
global function for which there exists no strictly decreasing path from the point $x=(0,1/2)$ to a global minimizer (i.e. any point in $[-1,1] \times \{ -1 \}$). We find this to be rather counter-intuitive. To the best of our knowledge, no such function has been presented in past literature. Hestenes \cite{hestenes1975} considered the function defined on $[-1,1] \times [-1,1]$ by $f(x_1,x_2) := (x_2-x_1^2)(x_2-4x_1^2)$ (see also \cite[Figure 1.1.18]{bertsekas2016}). It is a global function for which the point $x = (0,0)$ (which is not a global minimizer) admits no direction of descent, i.e. $d \in \mathbb{R}^2$ such that $t \in [0,1] \longmapsto f(x+td)$ is strictly decreasing. However, it does admit a strictly decreasing path to a global minimizer, i.e. $t \in [0,1] \longmapsto (\frac{\sqrt{10}}{4}t,t^2)$, along which the objective equals $-\frac{9}{16} t^4$. This is unlike the function exhibited in Figure \ref{fig:nopath}. As a byproduct, our function shows that the generalization of quasiconvexity to non-convex domains described in \cite[Chapter 9]{avriel1988} is a proper subset of global functions. This generalization was proposed in \cite{ortega1970} and further investigated in \cite{avriel1980,horst1982,horst1988,burai2013}. It consists in replacing the segment used to define convexity and quasiconvexity by a continuous path. 


Finally, we note that there exists a characterization of functions whose local minima are global, without requiring continuity as in global functions. It is based on a certain notion of continuity of sublevel sets, namely lower-semicontinuity of point-to-set mappings \cite[Theorem 3.3]{zang1975}. We will see below that continuity is a key ingredient for obtaining our results. We do not require more regularity precisely because one of our goals is to study nonsmooth functions. Speaking of which, observe that global functions can be nowhere differentiable, contrary to convex functions \cite[Theorems 2.1.2 and 2.5.1]{borwein2010}. Consider for example the global function defined on $]0,1[ ~ \times ~ ]0,1[$ by
$
f(x_1,x_2) := |2x_2-1| \sum_{n=0}^{+\infty} s(2^n x_1)/2^n
$
where $s(x) := \min_{ n \in \mathbb{N} } |x - n|$ is the distance to nearest integer. For any fixed $x_2 \neq 0$, the function $x_1 \in [0,1] \longmapsto f(x_1,x_2)/|x_2|$ is the Takagi curve \cite{takagi1901,allaart2011,lagarias2012} which is nowhere differentiable. It can easily be deduced that the bivariate function is nowhere differentiable. This is illustrated in Figure \ref{fig:nodiff}.

In the following, we review some of the properties of global functions. Their proofs can be found in the appendix. We begin by investigating the composition operation.

\begin{proposition}[Composition of functions]
\label{prop:composition}
Consider $f : S \longrightarrow \mathbb{R}$. Let $\phi: f(S) \longrightarrow \mathbb{R}$ denote a strictly increasing function where $f(S)$ is the range of $f$. It holds that $f \in \mathcal{G}(S)$ if and only if $\phi \circ f \in \mathcal{G}(S)$.
\end{proposition}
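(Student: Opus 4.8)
The plan is to prove the two implications separately, exploiting that $\phi$ being strictly increasing preserves the order structure that underlies both the notions of local and global minimum. The key observation is that for a strictly increasing $\phi$ on $f(S)$, we have $f(x) \leqslant f(y) \iff \phi(f(x)) \leqslant \phi(f(y))$ for all $x, y \in S$; indeed, if $f(x) \leqslant f(y)$ then monotonicity gives $\phi(f(x)) \leqslant \phi(f(y))$, and conversely if $f(x) > f(y)$ then strict monotonicity gives $\phi(f(x)) > \phi(f(y))$, contradiction. This equivalence immediately shows that $x$ is a global minimum of $f$ if and only if $x$ is a global minimum of $\phi \circ f$, and likewise (using the same inequality with the ball constraint $\|x-y\|_2 \leqslant \epsilon$ carried along verbatim) that $x$ is a local minimum of $f$ if and only if $x$ is a local minimum of $\phi \circ f$. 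Note that the $\epsilon$ can be chosen to be the same for both functions, since the neighborhood is determined purely by the domain $S$ and the Euclidean metric, not by the values of the function.

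Next I would address continuity, which is part of the definition of a global function. Here I need the fact that a strictly increasing function $\phi$ on a subset $I = f(S)$ of $\mathbb{R}$ is automatically continuous at every point where... actually this is \emph{not} true in general (a strictly increasing function can have jump discontinuities), so I would instead argue as follows: if $f$ is continuous, I want $\phi \circ f$ continuous, but this genuinely requires $\phi$ to be continuous, which is not assumed. The resolution is that the proposition should be read with the understanding that being a "global function" requires continuity, so the statement "$f \in \mathcal{G}(S) \iff \phi \circ f \in \mathcal{G}(S)$" already presupposes on each side that the relevant function is continuous; in other words, the content of the proposition is really about the minima condition, \emph{given} continuity. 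I would therefore state clearly that the equivalence of the minima structure is the mathematical heart, and handle continuity by noting that $\phi$ restricted to the range of a continuous function on a connected-or-not domain need not be continuous — so the cleanest reading is that we assume $\phi$ continuous, or that the claim is: \emph{among continuous functions}, $f$ has all local minima global iff $\phi \circ f$ does. I expect the author's intended hypothesis includes (or the context supplies) that $\phi$ is continuous, or $\phi\circ f$ is assumed continuous; I would flag this and then the continuity half is trivial: composition of continuous functions is continuous for the forward direction, and for the reverse one uses that a continuous strictly increasing $\phi$ has a continuous strictly increasing inverse $\phi^{-1}$ on its range, so $f = \phi^{-1} \circ (\phi \circ f)$ is continuous.

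With continuity dispatched, the remaining work is a short symmetric argument. For the forward direction, assume $f \in \mathcal{G}(S)$ and let $x$ be a local minimum of $\phi \circ f$; by the equivalence above $x$ is a local minimum of $f$, hence a global minimum of $f$ by hypothesis, hence a global minimum of $\phi \circ f$ again by the equivalence. For the reverse direction, assume $\phi \circ f \in \mathcal{G}(S)$ and let $x$ be a local minimum of $f$; then $x$ is a local minimum of $\phi \circ f$, hence global for $\phi \circ f$, hence global for $f$. Thus $f \in \mathcal{G}(S)$.

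The main obstacle, as noted, is purely the continuity bookkeeping around $\phi$: if $\phi$ is merely strictly increasing it need not be continuous, so either the hypothesis must be strengthened to "$\phi$ continuous and strictly increasing" (equivalently, a homeomorphism onto its image) or the statement must be interpreted as concerning only the "all local minima are global" clause. Everything else is a one-line consequence of the order-preservation identity $f(x) \leqslant f(y) \iff (\phi\circ f)(x) \leqslant (\phi\circ f)(y)$, applied once with a ball constraint to handle local minima and once without to handle global minima.
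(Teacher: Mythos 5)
Your argument is essentially the paper's own proof: the same order-preservation equivalence $f(x)\leqslant f(y) \Leftrightarrow \phi(f(x))\leqslant\phi(f(y))$ transfers local and global minima between $f$ and $\phi\circ f$, with the reverse direction handled via the strictly increasing inverse $\phi^{-1}$ on $\phi(f(S))$. Your continuity caveat is legitimate but does not constitute a different route: the paper's proof likewise addresses only the minima transfer and tacitly treats continuity of $\phi\circ f$ (resp.\ of $f$) as given, which is harmless in the paper's applications where $\phi$ (e.g.\ $t\mapsto t^{1/p}$) is continuous.
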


However, the set of global functions is not closed under composition of functions in general. For instance, $f(x):= |x|$ and $g(x) := \max( -1 , |x|-2)$ are global functions on $\mathbb{R}$, but $f \circ g$ is not global function on $\mathbb{R}$.

\begin{proposition}[Change of variables]
\label{prop:change}
Consider $f : S \longrightarrow \mathbb{R}$, a subset $S' \subset \mathbb{R}^n$, and a homeomorphism $\varphi: S \longrightarrow S'$. It holds that $f \in \mathcal{G}(S)$ if and only if $f \circ \varphi^{-1} \in \mathcal{G}(S')$.
\end{proposition}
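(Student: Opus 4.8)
The plan is to exhibit $\varphi$ as a correspondence that simultaneously preserves continuity, local minimality, and global minimality, so that the condition ``every local minimum is global'' for $f$ on $S$ becomes literally the same statement, transported along $\varphi$, for $g := f \circ \varphi^{-1}$ on $S'$. Throughout I will freely use $g = f \circ \varphi^{-1}$ together with $f = g \circ \varphi$ (the latter because $\varphi^{-1} \circ \varphi = \mathrm{id}_S$), and the fact that $\varphi^{-1} : S' \longrightarrow S$ is itself a homeomorphism, which will let me obtain each ``converse'' direction simply by swapping the roles of $\varphi$ and $\varphi^{-1}$.

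First I would dispose of continuity: since $\varphi$ and $\varphi^{-1}$ are both continuous, a composition of continuous maps argument gives that $f$ is continuous if and only if $g$ is continuous. Second, the key lemma is that $x \in S$ is a local minimum of $f$ if and only if $\varphi(x) \in S'$ is a local minimum of $g$. The only genuinely delicate point of the whole proof appears here: a homeomorphism need not be an isometry, so the Euclidean $\epsilon$-ball in Definition \ref{def:local minimum} is not preserved by $\varphi$. I would circumvent this by first observing that $x$ is a local minimum of $f$ if and only if there is a set $V$, relatively open in $S$, with $x \in V$ and $f(x) \leqslant f(y)$ for all $y \in V$ (one direction takes $V = \{\, y \in S : \|x-y\|_2 < \epsilon \,\}$; the other uses that any relatively open $V \ni x$ contains such a ball). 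Given this reformulation, if $x$ is a local minimum of $f$ witnessed by a relatively open $V \ni x$, then $\varphi(V)$ is relatively open in $S'$ and contains $\varphi(x)$ because $\varphi$ is a homeomorphism; for any $z \in \varphi(V)$, writing $z = \varphi(y)$ with $y \in V$ gives $g(z) = f(\varphi^{-1}(z)) = f(y) \geqslant f(x) = g(\varphi(x))$, so $\varphi(x)$ is a local minimum of $g$. The converse is identical, applied to the homeomorphism $\varphi^{-1}$ and to $f = g \circ \varphi$.

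Third, global minimality transports even more easily: since $\varphi$ is a bijection and $f = g \circ \varphi$, the ranges satisfy $f(S) = g(S')$, hence $\inf_S f = \inf_{S'} g$, and $x$ attains $\inf_S f$ if and only if $\varphi(x)$ attains $\inf_{S'} g$; thus $x$ is a global minimum of $f$ if and only if $\varphi(x)$ is a global minimum of $g$. Combining the three steps finishes the argument. Suppose $f \in \mathcal{G}(S)$. Then $g$ is continuous by the first step. Let $z$ be any local minimum of $g$; by the key lemma applied to $\varphi^{-1}$, the point $\varphi^{-1}(z)$ is a local minimum of $f$, hence (by $f \in \mathcal{G}(S)$) a global minimum of $f$, hence $z$ is a global minimum of $g$ by the third step. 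Therefore $g \in \mathcal{G}(S')$. The reverse implication is obtained by running the very same argument with $\varphi^{-1} : S' \longrightarrow S$ in place of $\varphi$ and $g$ in place of $f$.

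I expect the bookkeeping around the topological reformulation of local minimality (passing from the metric $\epsilon$-ball definition to relatively open neighbourhoods, so that homeomorphism invariance can be invoked) to be the main obstacle; once that reformulation is in place, every remaining step is a routine transport along the bijection $\varphi$, and the two directions of the equivalence are mirror images of one another.
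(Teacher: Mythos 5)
Your proof is correct and follows essentially the same route as the paper's: transport local minima along the homeomorphism (the paper does this with an explicit $\epsilon$--$\epsilon'$ argument using continuity of $\varphi$, you do it with relatively open neighbourhoods, which is the same mechanism in topological language), transport global minima via bijectivity of $\varphi$, and obtain the converse direction by swapping the roles of $\varphi$ and $\varphi^{-1}$. The only addition is your explicit remark on continuity of $f\circ\varphi^{-1}$, which the paper leaves implicit.
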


Next, we consider what happens if we have a sequence of global functions. 
Figure \ref{fig:pointwise} shows that the sequence of global functions (red dotted curves) pointwise converges to a function with a spurious local minimum (blue curve).
Figure \ref{fig:uniform} shows that uniform convergence also does not preserve the property of being a global function: all points on the middle part of the limit function (blue curve) are spurious local minima. However, it suggests that uniform convergence preserves a slightly weaker property than being a global function. Intuitively, the limit should behave like a global function except that it may have ``flat'' parts. We next formalize this intuition. To do so, we consider the notions of global minimum, local minimum, and strict local minimum  (Definition \ref{def:global minimum} and Definition \ref{def:local minimum}), which apply to points in $\mathbb{R}^n$, and generalize them to subsets of $\mathbb{R}^n$. We will borrow the notion of neighborhood of a set (\textit{uniform neighborhood} to be precise).


\begin{figure}[!h]
\begin{subfigure}{.5\textwidth}
\centering
\includegraphics[width=7cm]{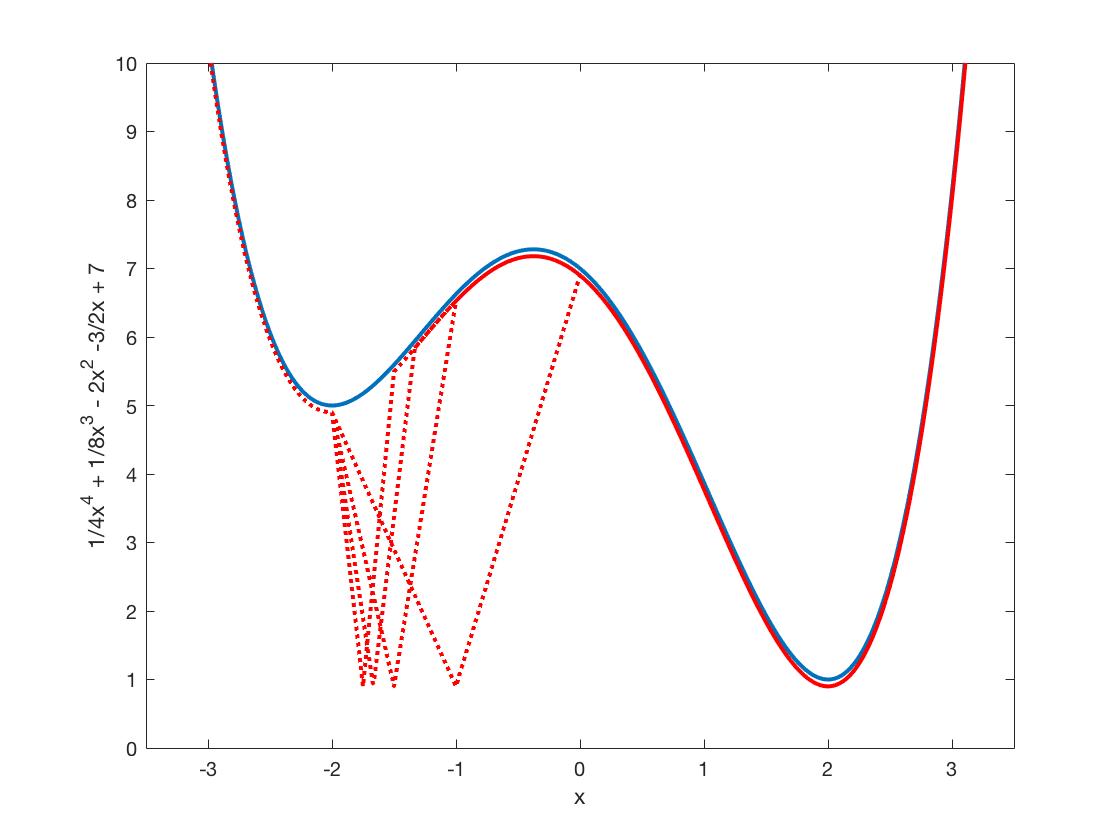}
\caption{Pointwise convergence}
	 \label{fig:pointwise}
\end{subfigure}
\begin{subfigure}{.5\textwidth}
\centering
\includegraphics[width=7cm]{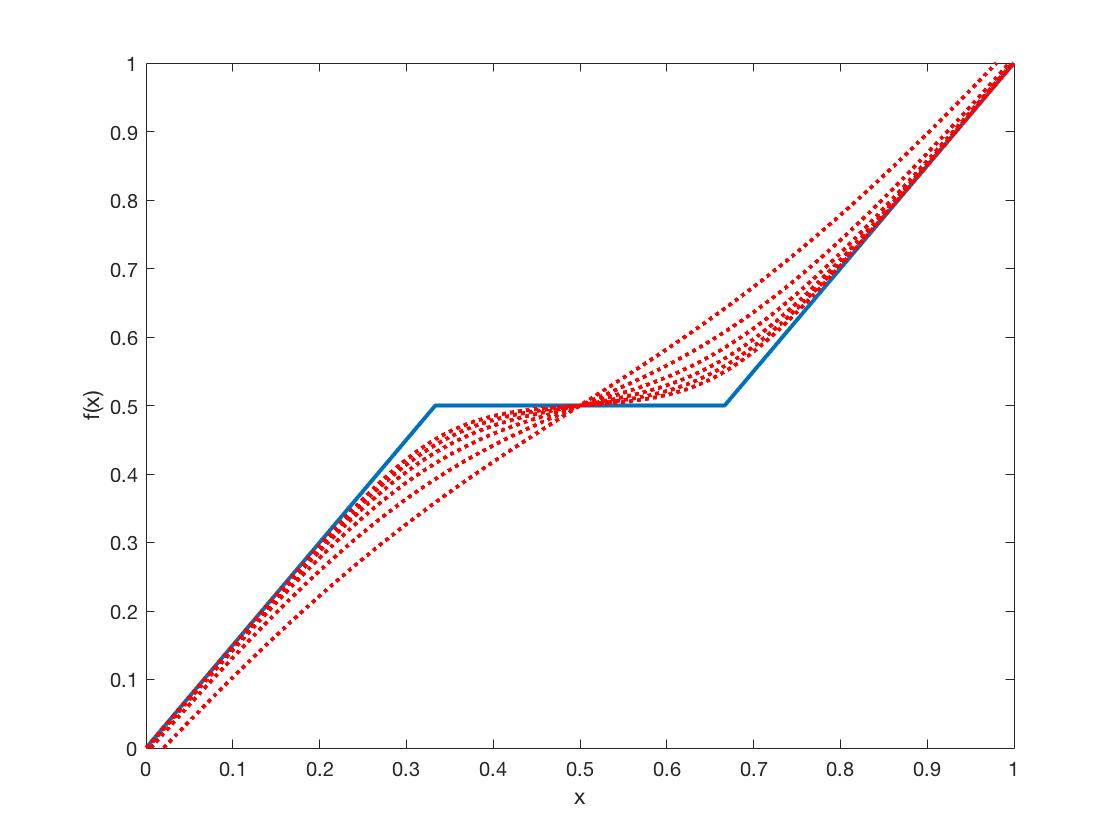}
\caption{Uniform convergence}
	 \label{fig:uniform}
\end{subfigure}
\caption{Convergence of a sequence of global functions}
\label{fig:convergence}
\end{figure}


\begin{definition} We say that a subset $X \subset S$ is a global minimum of $f : S \longrightarrow \mathbb{R}$ if $\inf_{X} f \leqslant \inf_{S \setminus X} f$. 
\end{definition}

We note in passing the following two propositions. We will use them repeatedly in the next section. The proofs are omitted as they follow directly from the definitions.

\begin{proposition}
\label{prop:subset}
Assume that the following statements are true:
\begin{enumerate}
\item $X \subset S$ is a global minimum of $f$;
\item $f \in \mathcal{G}(X)$;
\item $f$ does not have any local minima on $S \setminus X$.
\end{enumerate}
Then, $f \in \mathcal{G}(S)$.
\end{proposition}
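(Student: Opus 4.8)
The plan is to prove Proposition~\ref{prop:subset} directly from the definitions of local minimum, global minimum of a point, and global minimum of a set. The statement asserts that if $X$ is a ``good'' piece of $S$ (in the sense that it contains the infimum, the restriction of $f$ to $X$ is itself a global function, and $f$ has no local minima outside $X$), then $f$ is global on all of $S$. So I would take an arbitrary local minimum $x$ of $f$ on $S$ and argue it must be a global minimum of $f$ on $S$.

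First I would observe that, by hypothesis~(3), a local minimum $x$ of $f$ on $S$ cannot lie in $S \setminus X$, hence $x \in X$. Next I would argue that $x$ is then also a local minimum of the restriction $f|_X$: if $\|x-y\|_2 \leqslant \epsilon$ and $y \in S \setminus \{x\}$ implies $f(x) \leqslant f(y)$, then in particular this holds for all such $y \in X \setminus \{x\}$, so the same $\epsilon$ witnesses that $x$ is a local minimum of $f$ on $X$. By hypothesis~(2), $f \in \mathcal{G}(X)$, so $x$ is a global minimum of $f$ on $X$, i.e. $f(x) = \inf_X f$. Finally, by hypothesis~(1), $X$ is a global minimum of $f$ as a subset, meaning $\inf_X f \leqslant \inf_{S \setminus X} f$; combining, $f(x) = \inf_X f \leqslant \inf_{S} f$, so $f(x) \leqslant f(y)$ for every $y \in S$, which is exactly the statement that $x$ is a global minimum of $f$ on $S$.

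The only remaining item is continuity: a global function must be continuous, so I would note that $f$ is continuous on $S$ as part of the standing assumption (it must be, since $f \in \mathcal{G}(X)$ presupposes continuity of $f$, and implicitly $f$ is a continuous function on $S$ for the statement to be meaningful; indeed the propositions are stated for $f : S \to \mathbb{R}$ in the running context where only continuous functions are considered). Given that, the argument above shows every local minimum is global, so $f \in \mathcal{G}(S)$.

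Honestly, there is no real obstacle here — this is a bookkeeping argument and the excerpt itself says ``the proofs are omitted as they follow directly from the definitions.'' The one subtlety worth stating carefully is the passage from ``$x$ is a local minimum of $f$ on $S$'' to ``$x$ is a local minimum of $f$ on $X$,'' which relies on the fact that $X \subset S$ so the $\epsilon$-ball intersected with $X$ is contained in the $\epsilon$-ball intersected with $S$; and the edge case where $S \setminus X$ is empty, in which case hypotheses~(1) and~(3) are vacuous and the claim reduces immediately to hypothesis~(2).
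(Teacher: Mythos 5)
Your argument is correct and is precisely the direct-from-definitions proof the paper has in mind (the authors explicitly omit it, stating the result follows directly from the definitions): hypothesis (3) forces any local minimum into $X$, hypothesis (2) makes it a minimum over $X$, and hypothesis (1) extends minimality to all of $S$, with continuity on $S$ taken as the standing assumption. Nothing further is needed.
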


\begin{proposition}
\label{prop:cover}
If $(X_\alpha)_{\alpha \in A}$ are global minima for some index set $A$, then 
$
\bigcap_{\alpha \in A} \mathcal{G}( X_\alpha ) ~ \subset ~ \mathcal{G}\left( \bigcup_{\alpha \in A} X_\alpha \right).
$
\end{proposition}

We proceed to generalize the definition of local minimum.

\begin{definition} We say that a compact subset $X \subset S$ is local minimum (respectively, strict local minimum) of $f : S \longrightarrow \mathbb{R}$ if there exists $\epsilon>0$ such that for all $x \in X$ and for all $y \in S \setminus X$ satisfying $\|x - y \|_2 \leqslant \epsilon$, it holds that $f(x) \leqslant f(y)$ (respectively, $f(x) < f(y)$).\footnote{Note that the neighborhood of a compact set is always uniform.}
\end{definition}

The above definitions are distinct from the notion of valley proposed in \cite[Definition 1]{venturi2018}. The latter is defined as a connected component\footnote{A subset $C \subset S$ is connected if it is not equal to the union of two disjoint nonempty closed subsets of $S$. A maximal connected subset (ordered by inclusion) of $S$ is called a connected component.} of a sublevel set (i.e. $\{ x \in S ~|~ f(x) \leqslant \alpha \}$ for some $\alpha \in \mathbb{R}$). Local minima and strict local minima need not be valleys, and vice-versa. 
One may easily check that when the set is a point, i.e. $X = \{x\}$ with $x \in S$, the two definitions above are the same as the previous definitions of minimum (Definition \ref{def:global minimum} and Definition \ref{def:local minimum}). They are therefore consistent. It turns out that the notion of global function (Definition \ref{def:global function}) does not change when we interpret it in the sense of sets. We next verify this claim.

\begin{proposition}[Consistency of Definition \ref{def:global function}]
\label{prop:definition}
Let $f : S \longrightarrow \mathbb{R}$ denote a continuous function. All local minima are global minima in the sense of points if only if all local minima are global minima in the sense of sets.
\end{proposition}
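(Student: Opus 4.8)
The plan is to prove both implications, with the substantive direction being "points $\Rightarrow$ sets." For the easy direction, suppose all local minima in the sense of sets are global minima in the sense of sets. Let $x \in S$ be a local minimum in the sense of points. Then the singleton $X = \{x\}$ is a compact subset of $S$, and since the neighborhood of a compact set is always uniform, the point-wise local minimum condition is exactly the set-wise local minimum condition for $X = \{x\}$ (as already observed in the text, the two definitions coincide on singletons). Hence $\{x\}$ is a local minimum in the sense of sets, therefore a global minimum in the sense of sets, i.e. $\inf_{\{x\}} f \leqslant \inf_{S \setminus \{x\}} f$, which reads $f(x) \leqslant f(y)$ for all $y \in S \setminus \{x\}$. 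So $x$ is a global minimum in the sense of points.

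For the converse, assume every local minimum in the sense of points is a global minimum in the sense of points; equivalently, $f$ attains its infimum $m := \inf_S f$ at every point that is a local min, and no non-global point is a local min. Let $X \subset S$ be a compact set that is a local minimum in the sense of sets, with parameter $\epsilon > 0$; I must show $\inf_X f \leqslant \inf_{S \setminus X} f$. The key step is to extract an honest pointwise local minimizer from $X$. Since $X$ is compact and $f$ continuous, $f$ attains its minimum over $X$ at some $x^\star \in X$, so $f(x^\star) = \inf_X f$. I claim $x^\star$ is a local minimum of $f$ in the sense of points: given $y \in S \setminus \{x^\star\}$ with $\|x^\star - y\|_2 \leqslant \epsilon$, either $y \in X$, in which case $f(x^\star) \leqslant f(y)$ by choice of $x^\star$, or $y \in S \setminus X$, in which case the set-wise local minimum property applied to the pair $(x^\star, y)$ gives $f(x^\star) \leqslant f(y)$. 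Either way $f(x^\star) \leqslant f(y)$, so $x^\star$ is a pointwise local min, hence by hypothesis a global min, so $f(x^\star) = m = \inf_S f \leqslant \inf_{S \setminus X} f$. Since $\inf_X f = f(x^\star)$, this is precisely the statement that $X$ is a global minimum in the sense of sets.

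I expect the only mild subtlety — the "main obstacle," such as it is — to be making sure the compactness of $X$ is used exactly where needed: it guarantees that the infimum over $X$ is attained (so that we have a concrete candidate point $x^\star$ to feed into the pointwise hypothesis) and that the $\epsilon$-neighborhood in Definition \ref{def:uniform} is genuinely uniform, so that the single $\epsilon$ works simultaneously at $x^\star$. Everything else is a direct unwinding of Definition \ref{def:global minimum}, Definition \ref{def:local minimum}, Definition \ref{def:uniform}, and the set-valued notion of global minimum. No appeal to the composition, change-of-variables, or convergence machinery is required; the statement is purely definitional consistency, so the proof is short.
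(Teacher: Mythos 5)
Your proof is correct, and it takes a noticeably more direct route than the paper. The paper handles the nontrivial direction (points $\Rightarrow$ sets) by contrapositive: given a set-wise local minimum $X$ that is not a global minimum, it passes to the compact uniform neighborhood $V$ of $X$, takes a minimizer $x'$ of $f$ over $V$, and splits into the cases $x' \in X$ and $x' \in V \setminus X$ to exhibit a pointwise local minimum that is not global. You instead argue directly: minimize $f$ over the compact set $X$ itself, observe that the minimizer $x^\star$ is already a pointwise local minimum with the same $\epsilon$ (points of $X$ are handled by minimality of $x^\star$ over $X$, points of $S \setminus X$ by the set-wise inequality of Definition \ref{def:uniform}), and then invoke the hypothesis to get $\inf_X f = f(x^\star) \leqslant \inf_{S \setminus X} f$. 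Your version avoids the contrapositive and the case analysis, and---more substantively---it uses compactness only of $X$ (which Definition \ref{def:uniform} already requires) rather than of the neighborhood $V$; continuity enters solely to guarantee attainment of $\inf_X f$. Consequently the caveat at the end of Section \ref{sec:Notion of global function}, about implicitly relying on compactness of the uniform neighborhood (hence on finite dimension), is not needed for this particular proposition under your argument, which would carry over to infinite-dimensional settings. The paper's contrapositive proof, on the other hand, yields the slightly finer byproduct of locating a spurious pointwise local minimizer either at the minimizer of $V$ or at an arbitrary point of $X$, but for the stated equivalence your direct argument is shorter and complete; the easy direction (sets $\Rightarrow$ points via the singleton $\{x\}$) matches the paper's ``obvious'' direction.
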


We are ready to define a slightly weaker notion than being a global function. 

\begin{definition}
\label{def:weakly global}
We say that $f : S \longrightarrow \mathbb{R}$ is a \textit{weakly global function} if it is continuous and if all strict local minima are global minima in the sense of sets.
\end{definition}

The generalization from points to sets in the definition of a minimum is justified here, as can be seen in Figure \ref{fig:weak}. All strict local minima are global minima in the sense of points. However, $X=[a,b]$ with $a\approx -2.6$ and $b = -1$ is a strict local minimum that is not a global minimum. Indeed, $\inf_X f = 6 > 1 = \inf_{\mathbb{R} \setminus X} f$. Hence, the function is not weakly global.


\begin{figure}[!h]
	\centering
	\includegraphics[width=.5\textwidth]{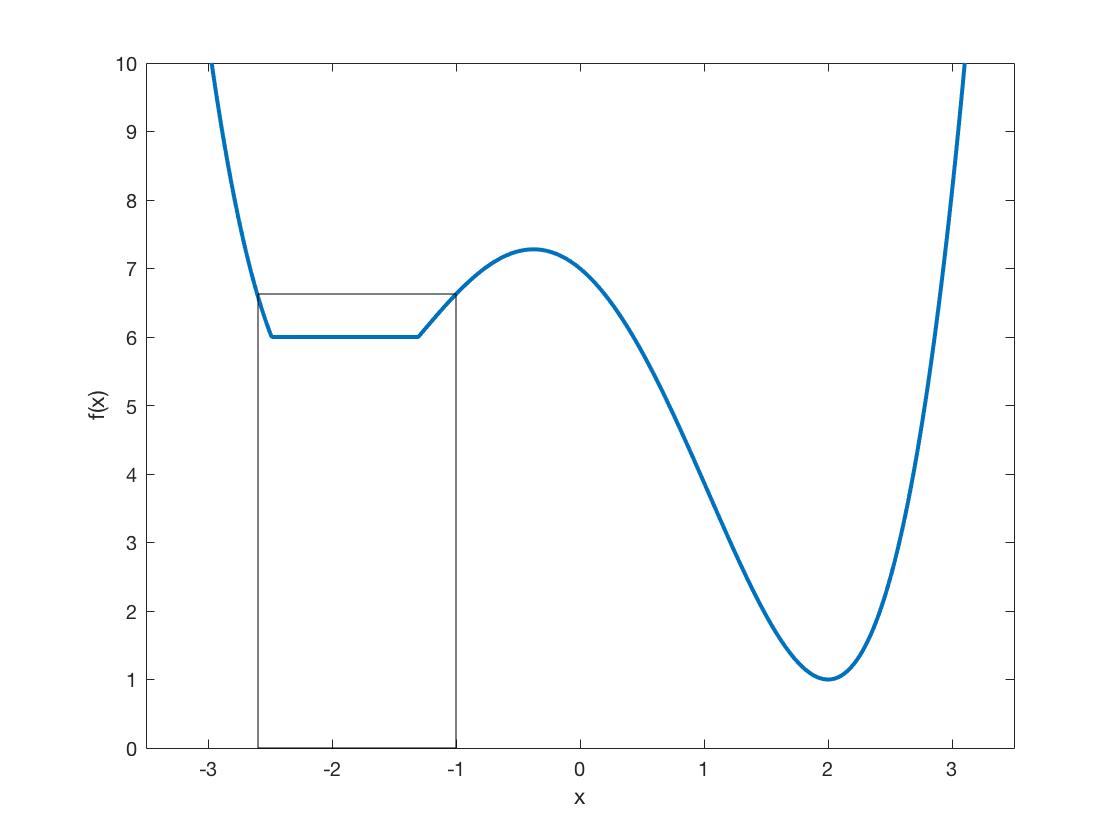}
	\caption{All strict local minima are global minima in the sense of points but not in the sense of sets.}
	 \label{fig:weak}
\end{figure}


We next make the link with the intuition regarding the flat part in Figure \ref{fig:uniform}.

\begin{proposition}
\label{prop:weakly global}
If $f : S \longrightarrow \mathbb{R}$ is a weakly global function, then it is constant on all local minima that are not global minima.
\end{proposition}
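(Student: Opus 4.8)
The plan is to argue by contradiction. Suppose $f$ is a weakly global function and let $X \subset S$ be a local minimum (in the sense of sets) that is not a global minimum, yet $f$ is not constant on $X$. I would like to extract from $X$ a smaller set on which $f$ attains strictly smaller values, show that this smaller set is a \emph{strict} local minimum, and then invoke the definition of weakly global to force it to be a global minimum — which will contradict the fact that $X$ itself is not a global minimum.

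Concretely, since $X$ is not constant, let $\alpha := \inf_X f$ and pick $\beta$ with $\alpha < \beta < \sup_X f$. Consider the sublevel set $Y := \{ x \in X : f(x) \leqslant \alpha + \tfrac{\beta - \alpha}{2} \}$, or more simply $Y := \{x \in X : f(x) < \beta\}$. The first step is to verify that $Y$ is nonempty, that it is a proper subset of $X$, and — this is the point where I would spend the most care — that $Y$ is \emph{compact}. Compactness of $Y$ should follow because $X$ is compact (local minima in the sense of sets are required to be compact, per Definition \ref{def:uniform}) and $f$ is continuous: $Y$ is closed in $X$ as the preimage of a closed half-line under $f|_X$ (using a non-strict inequality to ensure closedness), hence compact. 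The second step is to check that $Y$ is a strict local minimum of $f$ on $S$. Here I would use two facts: (i) on $X \setminus Y$ the value of $f$ is at least $\beta$, strictly above every value of $f$ on $Y$, and because both $\overline{Y}=Y$ and $X \setminus \overline{Y}$ behave well we get a uniform gap between points of $Y$ and nearby points of $X$; (ii) for points of $S \setminus X$ near $Y \subset X$, the local-minimum property of $X$ gives $f(x) \leqslant f(y)$, and a small shrinking of the neighborhood radius together with continuity upgrades this to a strict inequality against the strictly-smaller values on $Y$. Combining the neighborhood radii from (i) and (ii) yields a single $\epsilon$ witnessing that $Y$ is a strict local minimum.

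The third step is the payoff: since $f$ is weakly global, $Y$ must be a global minimum in the sense of sets, i.e. $\inf_Y f \leqslant \inf_{S \setminus Y} f$. But $X \setminus Y$ is contained in $S \setminus Y$ and carries values $\geqslant \beta > \inf_Y f$; that is consistent. The contradiction instead comes from comparing with $X$: we assumed $X$ is not a global minimum, so there is a point (or a sequence of points) in $S \setminus X$ with $f$-values below $\inf_X f = \inf_Y f$. Such points lie in $S \setminus Y$ as well, so $\inf_{S\setminus Y} f < \inf_Y f$, contradicting that $Y$ is a global minimum. Hence no such non-constant, non-global local minimum $X$ can exist, which is exactly the claim that $f$ is constant on every local minimum that is not a global minimum.

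The main obstacle I anticipate is the careful handling of the strict-inequality bookkeeping in step two, where non-strict ("$\leqslant$") facts about the local minimum $X$ have to be converted into strict ("$<$") facts about $Y$ using continuity and the value gap $\beta - \alpha$; the compactness of $X$ (and of $Y$) is what makes the neighborhoods uniform and lets these estimates go through. A minor subtlety is making sure $Y$ is defined with the right (non-strict) inequality so that it is closed, while still being a proper subset of $X$ and still lying strictly below the rest of $X$; choosing the threshold strictly between $\inf_X f$ and $\sup_X f$ and then taking $Y = \{x \in X : f(x) \leqslant \gamma\}$ for a value $\gamma$ that is actually attained handles this.
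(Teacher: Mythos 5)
Your overall strategy is sound and close in spirit to the paper's own proof: the paper also extracts from $X$ a smaller compact set that is a strict local minimum but not a global minimum (it takes $X' = \{x \in X \,:\, f(x) = \min_V f\}$, where $V$ is the uniform $\epsilon$-neighborhood of $X$, after first arguing that the minimizer of $f$ over $V$ must lie in the interior of $X$ because $f$ is not constant on $X$), whereas you take an intermediate sublevel set $Y = \{x \in X : f(x) \leqslant \gamma\}$ with $\inf_X f < \gamma < \sup_X f$. Your choice of $Y$ can be made to work and is arguably more direct; the bookkeeping for pairs $x \in Y$, $y \in X \setminus Y$ is indeed trivial ($f(x) \leqslant \gamma < f(y)$, no $\epsilon$ needed), $Y$ is compact and nonempty without requiring $\gamma$ to be attained, and your final contradiction ($\inf_{S\setminus Y} f \leqslant \inf_{S\setminus X} f < \inf_X f = \inf_Y f$, against $Y$ being a global minimum in the sense of sets) is correct.

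The gap is in your step (ii), the comparison with points $y \in S \setminus X$ near $Y$. You propose to upgrade the non-strict inequality $f(x) \leqslant f(y)$, furnished by the local-minimum property of $X$, to a strict one by ``shrinking the neighborhood radius together with continuity.'' Continuity cannot do this: the inequality may be an exact equality for outside points arbitrarily close to $Y$ (picture $f$ taking a value $\gamma' \leqslant \gamma$ on a piece of the boundary of $X$ and the same value on a sliver of $S \setminus X$ adjacent to that piece; then $Y$ is not a strict local minimum, and no local, continuity-based argument rescues it). What actually closes the step --- and what the paper's own proofs of Propositions \ref{prop:definition}, \ref{prop:weakly global} and \ref{prop:compact} rely on --- is the uniform-neighborhood form of Definition \ref{def:uniform}: if $y \in S \setminus X$ lies within $\epsilon$ of $X$, then $f(y) \geqslant f(x'')$ for \emph{every} $x'' \in X$, not only for the $x''$ near $y$. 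Hence $f(y) \geqslant \sup_X f > \gamma \geqslant f(x)$ for all $x \in Y$, with the uniform gap $\sup_X f - \gamma > 0$; this is precisely where the hypothesis that $f$ is not constant on $X$ (which makes $\gamma < \sup_X f$ possible) enters. With that substitution your step (ii) holds for any radius $\epsilon' \leqslant \epsilon$ and the rest of your argument goes through; if instead you only compare $y$ with nearby points of $Y$, whose values are all $\leqslant \gamma$, the strictness you need genuinely fails.
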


We are interested in functions that are potentially defined on all of $\mathbb{R}^n$ (i.e. unconstrained optimization) or on subsets $S \subset \mathbb{R}^n$ that are not necessarily compact (i.e. general constrained optimization). We therefore need to borrow a slightly more general notion than uniform convergence \cite[page 95, Section 3]{remmert1991}.

\begin{definition} We say that a sequence of continuous functions $f_k : S \longrightarrow \mathbb{R}, k = 1,2,\ldots,$ converges \textit{compactly} towards $f: S \longrightarrow \mathbb{R}$ if for all compact subsets $K \subset S$, the restrictions of $f_k$ to $K$ converge uniformly towards the restriction of $f$ to $K$.
\end{definition}

We are now ready to state a result regarding the convergence of a sequence of global functions and an important property that is preserved in the process.

\begin{proposition}[Compact convergence]
\label{prop:compact}
Consider a sequence of functions $(f_k)_{k \in \mathbb{N}}$ and a function $f$, all from $S \subset \mathbb{R}^n$ to $\mathbb{R}$. If 
\begin{equation}
f_k \longrightarrow f ~~~ \text{compactly} 
\end{equation}
and if $f_k$ are global functions on $S$, then $f$ is a weakly global function on $S$.
\end{proposition}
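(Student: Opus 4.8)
The plan is to argue by contradiction: if the limit $f$ possessed a strict local minimum in the sense of sets that were not a global minimum, I would extract from it a genuine spurious local minimum of one of the $f_k$, contradicting $f_k\in\mathcal{G}(S)$. As a preliminary, $f$ is continuous — this is the standard fact that a compact limit of continuous functions is continuous, and it is the one point needing mild care since $S$ need not be locally compact: for $x_0\in S$ and any sequence $s_n\to x_0$ in $S$, the set $\{x_0\}\cup\{s_n:n\in\mathbb{N}\}$ is compact, so $f_k\to f$ uniformly on it, hence $f$ is continuous on it and $f(s_n)\to f(x_0)$. It then remains to show that every strict set-local-minimum of $f$ is a global minimum in the sense of sets.

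Suppose $X\subset S$ is a compact strict local minimum of $f$ with parameter $\epsilon>0$ but $m:=\inf_X f>\inf_{S\setminus X}f$, and fix $z\in S\setminus X$ with $f(z)<m$; since $X$ is compact, $m$ is attained, say at $x_0\in X$. Strictness yields the essential inequality: for every $y\in S$ with $0<\mathrm{dist}(y,X)\le\epsilon$, choosing $x^\ast\in X$ with $\|x^\ast-y\|\le\epsilon$ gives $m\le f(x^\ast)<f(y)$, so $f>m$ on $\{y\in S:0<\mathrm{dist}(y,X)\le\epsilon\}$ (in particular $\mathrm{dist}(z,X)>\epsilon$). Using compactness of $X$, I then fix a compact $K\subset S$ with $X$ inside the interior $\mathrm{int}_S K$ of $K$ relative to $S$ and with $K\subseteq\{y\in S:\mathrm{dist}(y,X)\le\epsilon/2\}$ — for instance $K=\{y\in S:\mathrm{dist}(y,X)\le\epsilon/2\}$ itself. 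On $K$ one has $f\ge m$, with $f>m$ on $K\setminus X$ and hence on the relative boundary $\partial_S K\subseteq K\setminus X$; being compact, $\partial_S K$ (if nonempty) satisfies $\mu:=\inf_{\partial_S K}f>m$.

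Fix $c\in(m,\mu)$. By uniform convergence of $f_k$ to $f$ on the compact set $K$, for all large $k$ we have $f_k(x_0)<c$ while $f_k>c$ on $\partial_S K$. Consequently any minimizer $p_k$ of $f_k$ over $K$ satisfies $f_k(p_k)\le f_k(x_0)<c$, so $p_k\in\mathrm{int}_S K$, which makes $p_k$ a local minimum of $f_k$ on all of $S$. Since $f_k$ is a global function, $p_k$ is then a global minimum of $f_k$, so $f_k(p_k)\le f_k(z)$. Passing to a subsequence with $p_{k_j}\to p^\ast\in K$, uniform convergence on $K$ together with continuity of $f$ gives $f_{k_j}(p_{k_j})\to f(p^\ast)$, and $f_{k_j}(z)\to f(z)$; therefore $f(p^\ast)\le f(z)<m$, contradicting $f\ge m$ on $K$. (The case $\partial_S K=\emptyset$, i.e. $K$ relatively clopen, only makes $p_k\in\mathrm{int}_S K$ automatic.) Hence $f$ is weakly global.

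I expect the crux to be the choice of $K$ together with the observation that $f$ — and so $f_k$ for large $k$ — stays strictly above $m$ on $\partial_S K$: this is exactly what confines the minimizer $p_k$ of $f_k|_K$ to the relative interior and makes it an honest local minimum of $f_k$, and it is where the \emph{strictness} of the local minimum is indispensable. For a merely non-strict local minimum, $f$ could be flat (equal to $m$) near $\partial_S K$, the $f_k$ could dip just below $f_k(x_0)$ there, $p_k$ could land on $\partial_S K$, and the argument collapses — which is precisely why the conclusion is only ``weakly global'' rather than ``global'', matching the picture in Figure \ref{fig:uniform}.
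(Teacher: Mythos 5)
Your proof is correct and follows essentially the same route as the paper's: restrict to a compact neighborhood of the strict local minimum $X$, exploit strictness to get a positive gap on its relative boundary, use uniform convergence on that compact set to trap a minimizer of $f_k$ in its interior so that it becomes a local minimum of $f_k$, and then contradict the globality of $f_k$. The only minor divergences are in the endgame (you pass to a convergent subsequence of minimizers and use $f_k(p_k)\leqslant f_k(z)$ in the limit, which incidentally also covers $\inf_S f=-\infty$, whereas the paper derives a contradiction at a fixed large $k$ via an inequality chain with $\inf_K f_k$) and in the observation that your candidate $K=\{y\in S:\mathrm{dist}(y,X)\leqslant\epsilon/2\}$ is compact only under the same implicit assumption on $S$ that the paper itself acknowledges at the end of Section \ref{sec:Notion of global function}.
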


Note that the proofs in this section are not valid if we replace the Euclidian space by an infinite-dimensional metric space. Indeed, we have implicitely used the fact that the unit ball is compact in order for the uniform neighborhood of a minimum to be compact.

\section{Application to tensor decomposition}
\label{sec:Application to tensor decomposition}

Global functions can be used to prove the following two significant results on nonlinear functions involving $\ell_1$ norm and $\ell_\infty$ norm, as explained below.

\begin{proposition}
\label{prop:L1}
The function $f_{1}:  \mathbb{R}^n  \longrightarrow  \mathbb{R} $ defined as 
\begin{equation}
f_{1}(x) := \sum\limits_{i_1,\hdots,i_d=1}^n |x_{i_1} \hdots x_{i_d} - y_{i_1} \hdots y_{i_d}|
\end{equation}
is a weakly global function; in particular, it has no spurious strict local minima.
\end{proposition}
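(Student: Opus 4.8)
The plan is to exhibit $f_1$ as a compact (indeed uniform on compacts) limit of smooth global functions, so that Proposition~\ref{prop:compact} immediately yields that $f_1$ is weakly global. Concretely, I would fix a sequence of smooth approximations $h_k:\mathbb{R}\longrightarrow\mathbb{R}$ of the absolute value, for instance $h_k(t):=\sqrt{t^2+1/k^2}$ (or the Huber-type smoothing), which converge uniformly on $\mathbb{R}$ to $|\cdot|$, are smooth, convex, and strictly increasing on $[0,\infty)$ and strictly decreasing on $(-\infty,0]$. Then set $f_{1,k}(x):=\sum_{i_1,\dots,i_d=1}^n h_k\!\big(x_{i_1}\cdots x_{i_d}-y_{i_1}\cdots y_{i_d}\big)$. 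Since each summand is a smooth function of $x$ and the sum is finite, $f_{1,k}$ is smooth; and because $|h_k(t)-|t||\le 1/k$ uniformly in $t$, we get $\|f_{1,k}-f_1\|_\infty\le n^d/k$ on all of $\mathbb{R}^n$, hence compact (in fact uniform) convergence.

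The substantive step is to show that each $f_{1,k}$ is a global function, i.e.\ that the smooth problem $\min_x \sum h_k(x_{i_1}\cdots x_{i_d}-y_{i_1}\cdots y_{i_d})$ has no spurious local minima. Here I would exploit the structure of the rank-one monomial map together with a symmetry/change-of-variables argument, using Proposition~\ref{prop:change}. The key observation is that, up to sign patterns, the monomial $(x_1,\dots,x_n)\mapsto (x_{i_1}\cdots x_{i_d})_{i_1,\dots,i_d}$ behaves like the rank-one tensor $v^{\otimes d}$ with $v=x$, and one can restrict to an orthant or use the homeomorphism $x_i\mapsto \mathrm{sign}(x_i)|x_i|^{1/\text{something}}$ to straighten things out. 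More directly: at a local minimum $x^\star$ of $f_{1,k}$, the gradient vanishes; writing out $\nabla f_{1,k}(x^\star)=0$ and using that $h_k'$ is strictly monotone (so $h_k'(t)$ has the sign of $t$ and never vanishes for $t\ne 0$), one derives that either $x^\star$ reproduces the target tensor (global min, value near $0$) or a rigidity argument forces a contradiction with local minimality — exactly the type of case analysis the paper warns is delicate, which is why I would prefer to inherit it from a cleaner sub-lemma. In fact the cleanest route is: prove that for \emph{any} strictly convex smooth even function $h$ with $h'$ strictly monotone, $x\mapsto\sum_{i_1,\dots,i_d} h(x_{i_1}\cdots x_{i_d}-y_{i_1}\cdots y_{i_d})$ is a global function, by reducing (via Proposition~\ref{prop:change} and restriction to connected components of constant sign pattern, handled by Proposition~\ref{prop:subset}) to the behavior on the positive orthant, where the substitution $u_i=\log x_i$ turns monomials into linear forms and the composed function becomes essentially convex, hence global.

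I expect the main obstacle to be precisely the reduction to the positive orthant and the gluing across sign patterns: the set $\{x : x_i\ne 0\ \forall i\}$ has $2^n$ connected components, on each of which the monomial map is a diffeomorphism onto (a copy of) the positive orthant, but the coordinate hyperplanes $\{x_i=0\}$ are not negligible — a local minimum could sit on such a hyperplane, where $f_{1,k}$ is still smooth but the change of variables degenerates. To handle this I would argue that on $\{x_i=0\}$ for some $i$, the objective reduces to a constant (all monomials involving that index vanish) plus the contribution of monomials not involving index $i$, which is lower-dimensional of the same form, and then either the value is already global or one can escape along the $x_i$-direction; this needs a short argument that no spurious local minimum can be created on the coordinate subspaces, which is where Proposition~\ref{prop:subset} (with $X$ the global-minimum set) and Proposition~\ref{prop:cover} earn their keep. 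Once each $f_{1,k}\in\mathcal{G}(\mathbb{R}^n)$ is established, Proposition~\ref{prop:compact} gives that $f_1$ is weakly global, and the ``in particular'' clause — no spurious strict local minima — is then immediate from Proposition~\ref{prop:weakly global} together with the fact that a constant-valued strict local minimum in the sense of sets that is not global contradicts Definition~\ref{def:weakly global}.
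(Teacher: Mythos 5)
Your overall skeleton is the same as the paper's: approximate $f_1$ by smooth global functions, invoke Proposition~\ref{prop:compact}, and read off the ``in particular'' clause (that last part is fine). But the entire mathematical content of the proposition sits in the step you do not actually prove, namely that each smooth approximant is a global function, and neither of the two routes you sketch works as stated. The choice of smoothing is not incidental: the paper uses $f_p$ as in \eqref{eq:f_k} with $p\downarrow 1$ precisely because $|\cdot|^p$ is homogeneous, so at a stationary point one can divide the $i$-th stationarity equation by the appropriate powers of $|y_{i_1}\cdots y_{i_{d-1}}y_i|$ and all $n$ conditions collapse into the statement that the ratios $x_1/y_1,\dots,x_n/y_n$ are roots of one and the same strictly increasing scalar function of $t$; this forces all ratios to be equal, then $t^d=1$, hence $x=\pm y$ and every local minimum of $f_p$ is global. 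With $h_k(t)=\sqrt{t^2+1/k^2}$ this factorization is unavailable ($h_k'(\lambda s)$ does not split into a function of $s$ times a data-dependent constant), so each coordinate's stationarity condition is the root of a \emph{different} function of $t$, the ``single common root'' argument evaporates, and the claim that $f_{1,k}$ is global is left entirely unproved; the ``rigidity argument forces a contradiction'' is an assertion, not a proof.

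Your proposed cleaner sub-lemma is also flawed: on the positive orthant with $u_i=\log x_i$, a term $h\big(e^{u_{i_1}+\cdots+u_{i_d}}-c\big)$ with $c>0$ is \emph{not} convex in $u$ in the region where the monomial undershoots the target, since there it behaves like $c-e^{\langle a,u\rangle}$, which is concave; so ``essentially convex, hence global'' fails exactly where spurious critical behavior would need to be excluded. The gluing across the $2^n$ sign orthants and the coordinate hyperplanes is likewise only promised: Proposition~\ref{prop:cover} requires the pieces to be global minima (orthants are not), and Proposition~\ref{prop:subset} requires you to exhibit a global-minimum subset and show there are no local minima outside it, neither of which your sketch supplies. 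The repair is straightforward: keep your limiting argument, but take the approximants to be the $p$-power functions $f_p$ (or any smoothing that preserves the homogeneity of the absolute value) and establish their globality by the first-order condition together with the strictly-increasing-function-of-$t$ argument, which is exactly the paper's proof.
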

\begin{proof}
The functions
\begin{equation}
\label{eq:f_k}
f_p(x) := \sum\limits_{i_1,\hdots,i_d=1}^n |x_{i_1} \hdots x_{i_d} - y_{i_1} \hdots y_{i_d}|^{p}
\end{equation}
for $p \longrightarrow 1$ with $p>1$
form a set of global functions that converge compactly towards the function $f_{1}$. This is illustrated in Figure \ref{fig:uniform matrix} in Appendix \ref{subsec:eg} for $n=d=2$ and $y = ( 1 , -3/4)$. The desired result then follows from Proposition \ref{prop:compact}. To see why each $f_p$ is a global function, observe that $f_p$ is differentiable with the first-order optimality condition as follows:
$$
\sum\limits_{i_1,\hdots,i_{d-1}=1}^n x_{i_1} \hdots x_{i_{d-1}} ( x_{i_1} \hdots x_{i_{d-1}}  x_i - y_{i_1} \hdots y_{i_{d-1}}y_i ) | x_{i_1} \hdots x_{i_{d-1}}  x_i - y_{i_1} \hdots y_{i_{d-1}}y_i |^{p-2} = 0
$$
for all $i \in \{1,\dots,n\}$. Note that each term in the sum converges towards zero if the expression inside the absolute value converges towards zero, so that the equation in well-defined. Consider a local minimum $x \in \mathbb{R}^n$; then, $x$ must satisfy the above first-order optimality condition.
If $y_i = 0$, then the above equation readily yields $x_i = 0$. This reduces the problem dimension from $n$ variables to $n-1$ variables, so without loss of generality we may assume that $y_i \neq 0,~i =1,\hdots,m$. After a division, observe that the following equation is satisfied
$$
\sum\limits_{i_1,\hdots,i_{d-1}=1}^n |y_{i_1} \hdots y_{i_{d-1}}|^{p} ~ \frac{x_{i_1} \hdots x_{i_{d-1}}}{y_{i_1} \hdots y_{i_{d-1}}} \left( \frac{x_{i_1} \hdots x_{i_{d-1}}}{y_{i_1} \hdots y_{i_{d-1}}} t - 1 \right) \left| \frac{x_{i_1} \hdots x_{i_{d-1}}}{y_{i_1} \hdots y_{i_{d-1}}}t - 1 \right|^{p-2} = 0
$$
for all $t \in \{ x_1/y_1, \hdots, x_n / y_n\}$. Each term with $x_{i_1} \hdots x_{i_{d-1}} \neq 0$ in the above sum is a strictly increasing function of $t \in \mathbb{R}$ since it is the derivative of the strictly convex function 
\begin{equation}
g(t) = | x_{i_1} \hdots x_{i_{d-1}} t - y_{i_1} \hdots y_{i_{d-1}} |^{p}.
\end{equation}
The point $x=0$ is not a local minimum ($y$ is a direction of descent of $f_p$ at $0$), and thus $x \neq 0$. As a result, the above sum is a strictly increasing function of $t \in \mathbb{R}$. Hence, it has at most one root, that is to say $t = x_1/y_1 = \cdots = x_n / y_n$. Plugging in, we find that $t^d = 1$. If $d$ is odd, then $x=y$ and if $d$ is even, then $x=\pm y$. To conclude, any local minimum $x$ is a global minimum of $f_p$.
\end{proof}

\begin{proposition}
\label{prop:max}
$f_{\infty}:  \mathbb{R}^n  \longrightarrow  \mathbb{R} $ defined as 
\begin{equation}
f_{\infty}(x) := \max\limits_{1\leqslant i_1,\hdots,i_d \leqslant n} ~ |x_{i_1} \hdots x_{i_d} - y_{i_1} \hdots y_{i_d}|
\end{equation}
is a weakly global function; in particular, it has no spurious strict local minima.
\end{proposition}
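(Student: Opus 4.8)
The plan is to mirror the proof of Proposition \ref{prop:L1}: exhibit a sequence of global functions converging compactly to $f_\infty$ and then invoke Proposition \ref{prop:compact}. The natural choice is to replace the maximum by the $\ell_p$ norm of the same vector of residuals. Concretely, set $N := n^d$ and, for each real $p > 1$, let
\[
F_p(x) := \sum_{i_1,\hdots,i_d=1}^n \left| x_{i_1}\hdots x_{i_d} - y_{i_1}\hdots y_{i_d} \right|^p,
\qquad
f_p(x) := F_p(x)^{1/p}.
\]

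First I would observe that each $F_p$ is a global function on $\mathbb{R}^n$: this is exactly what the proof of Proposition \ref{prop:L1} establishes for the functions \eqref{eq:f_k}, since the first-order optimality analysis there only uses $p>1$ (via strict convexity of $t \mapsto |at-b|^p$). Because $t \mapsto t^{1/p}$ is strictly increasing on $[0,\infty) \supseteq F_p(\mathbb{R}^n)$, Proposition \ref{prop:composition} immediately upgrades this to: each $f_p$ is a global function on $\mathbb{R}^n$. This is the step where I sidestep any re-examination of differentiability of the $p$-th root — the composition proposition does all the work.

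Second, I would verify that $f_p \to f_\infty$ compactly. Writing $v(x) := \big( x_{i_1}\hdots x_{i_d} - y_{i_1}\hdots y_{i_d} \big)_{1 \le i_1,\hdots,i_d \le n} \in \mathbb{R}^N$, the elementary inequalities $\|v\|_\infty \leqslant \|v\|_p \leqslant N^{1/p}\|v\|_\infty$ give
\[
0 \leqslant f_p(x) - f_\infty(x) \leqslant \big( N^{1/p} - 1 \big)\, f_\infty(x).
\]
On any compact $K \subset \mathbb{R}^n$ the continuous function $f_\infty$ is bounded, say by $C_K$, so $\sup_{x \in K}|f_p(x)-f_\infty(x)| \leqslant (N^{1/p}-1)\,C_K \to 0$ as $p \to \infty$. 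Hence the convergence is uniform on every compact set, i.e. compact, and Proposition \ref{prop:compact} yields that $f_\infty$ is a weakly global function on $\mathbb{R}^n$. In particular, any strict local minimum $\{x\}$ is then a global minimum in the sense of sets (Definition \ref{def:weakly global}), i.e. $f_\infty(x) = \inf_{\mathbb{R}^n} f_\infty$, so there are no spurious strict local minima.

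I do not anticipate a genuine obstacle: the argument is a direct transcription of the $\ell_1$ case, with the $\ell_p$-to-$\ell_\infty$ limit ($p \to \infty$) replacing the $p \to 1^+$ limit. The only point that warrants care is confirming that the convergence is compact rather than merely pointwise, which is precisely why one must note that the residual map sends compact sets to bounded sets and then use $N^{1/p}-1 \to 0$ to close the gap. If one preferred not to cite the internals of the proof of Proposition \ref{prop:L1}, one could instead re-derive the first-order optimality condition for $f_p$ directly and repeat the monotonicity argument verbatim, but reusing Proposition \ref{prop:composition} is cleaner.
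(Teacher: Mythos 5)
Your proposal is correct and follows essentially the same route as the paper's proof: the paper also takes $h_p(x) = \bigl(\sum_{i_1,\hdots,i_d} |x_{i_1}\hdots x_{i_d} - y_{i_1}\hdots y_{i_d}|^p\bigr)^{1/p}$, obtains its globality from the functions in \eqref{eq:f_k} via Proposition \ref{prop:composition}, and concludes by compact convergence and Proposition \ref{prop:compact}. Your explicit verification of compact convergence via $\|v\|_\infty \leqslant \|v\|_p \leqslant N^{1/p}\|v\|_\infty$ merely spells out a step the paper leaves implicit.
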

\begin{proof}
The functions
$
h_p(x) := \left(\sum\limits_{i_1,\hdots,i_d=1}^n |x_{i_1} \hdots x_{i_d} - y_{i_1} \hdots y_{i_d}|^{p}\right)^{\frac{1}{p}}
$
for $p \longrightarrow + \infty$
form a set of global functions that converge compactly towards the function $f_{\infty}$. We know that each $h_p$ is a global function by applying Proposition \ref{prop:composition} to \eqref{eq:f_k} with the fact that $(\cdot)^{\frac{1}{p}}$ is increasing for nonnegative arguments.
\end{proof}

Note that the functions in Proposition \ref{prop:L1} and Proposition \ref{prop:max} are \textit{a priori} utterly different, yet both proofs are essentially the same. This highlights the usefulness of the new notion of global functions. 
\begin{remark}
The notion of weakly global functions explains that one can perform tensor decomposition by minimizing the nonconvex and nonsmooth functions in Proposition \ref{prop:L1} and Proposition \ref{prop:max} with a local search algorithm. Whenever the algorithm reports a strict local minimum, it is a globally optimal solution. 
\end{remark}

In order to strengthen the conclusion in Proposition \ref{prop:L1} and to establish the absence of spurious local minima, we propose the following two lemmas. 
Using Proposition \ref{prop:subset} and these two lemmas, we arrive at the stronger result stated in Proposition \ref{prop:L1_strong}. 

\begin{lemma}
\label{lemma:clarke}
If $x \in \mathbb{R}^n$ is a first-order stationary point of $f_{1}$
in the sense of the Clarke derivative, then the following statements hold:
\begin{enumerate}
\item If $y_i = 0$ for some $i \in \{1,\ldots, n\}$, then $x_i = 0$;
\item For all $i_1,\hdots,i_d \in \{1,\ldots, n\}$, it holds that
$\frac{x_{i_1} \hdots x_{i_{d}}}{y_{i_1} \hdots y_{i_{d}}} \leqslant 1.$
\end{enumerate}
\end{lemma}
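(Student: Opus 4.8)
\textbf{Proof plan for Lemma \ref{lemma:clarke}.}

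The plan is to write out the Clarke subdifferential of $f_1$ explicitly and extract the two stated necessary conditions from the requirement $0 \in \partial f_1(x)$. Since $f_1$ is a finite sum of compositions of the smooth polynomial map $x \longmapsto x_{i_1}\cdots x_{i_d} - y_{i_1}\cdots y_{i_d}$ with the absolute value, the sum rule and chain rule for Clarke subdifferentials (valid here because each summand is the composition of a locally Lipschitz convex function with a $C^1$ map) give
$$
\partial f_1(x) \subset \sum_{i_1,\hdots,i_d=1}^n \xi_{i_1\cdots i_d}\,\nabla\!\left(x_{i_1}\cdots x_{i_d}\right),
$$
where $\xi_{i_1\cdots i_d} = \operatorname{sign}(x_{i_1}\cdots x_{i_d} - y_{i_1}\cdots y_{i_d})$ when this quantity is nonzero and $\xi_{i_1\cdots i_d}\in[-1,1]$ otherwise; here $\nabla(x_{i_1}\cdots x_{i_d})$ has $k$-th component $\sum_{\ell:\,i_\ell=k}\prod_{m\neq\ell}x_{i_m}$. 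First I would record the resulting stationarity system: there exist selections $\xi_{i_1\cdots i_d}$ in the appropriate sign sets such that for every $k\in\{1,\hdots,n\}$ the $k$-th partial vanishes.

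For statement 1, I would fix an index $i$ with $y_i=0$ and suppose for contradiction that $x_i\neq 0$. Grouping the $k=i$ equation by how many of the indices $i_1,\hdots,i_d$ equal $i$, and using $y_i=0$ (so every term containing a $y_i$ factor drops out of the corresponding residual), I would argue as in the smooth case of Proposition \ref{prop:L1}: the relevant terms all share the sign of $x_i$ times a product of squares of other coordinates, hence cannot cancel unless the whole block is zero, which forces $x_i=0$. This mirrors the computation already carried out for $f_p$ in the proof of Proposition \ref{prop:L1}, with sign multipliers in place of the $|\cdot|^{p-2}$ weights. Having disposed of the vanishing-$y_i$ coordinates, I would then assume $y_i\neq 0$ for all $i$ for the remainder.

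For statement 2, the strategy is a monotonicity / extremal-ratio argument. Set $t_i := x_i/y_i$ and suppose, toward a contradiction, that some product ratio $t_{i_1}\cdots t_{i_d} > 1$; equivalently, ordering the $t_i$, that the maximal $|t_i|$ (or the product of the $d$ largest in absolute value, appropriately signed) exceeds $1$. I would pick the index $k$ achieving $t_k = \max_i t_i$ (treating signs carefully, possibly after the reflection symmetry $x\mapsto -x$ when $d$ is even) and look at the $k$-th stationarity equation divided through by suitable $|y|$ factors, as in Proposition \ref{prop:L1}. Each residual $x_{i_1}\cdots x_{i_{d-1}}x_k - y_{i_1}\cdots y_{i_{d-1}}y_k$ appearing with $x_{i_1}\cdots x_{i_{d-1}}\neq 0$ has the sign of $(t_{i_1}\cdots t_{i_{d-1}}t_k - 1)$; because $t_k$ is maximal this sign is nonnegative for the "large" terms, so the convex-subgradient selection $\xi$ is forced to be $+1$ there, and the corresponding contribution to the $k$-th partial is a strictly positive quantity (a sum of squares times $(t_{i_1}\cdots t_{i_{d-1}}t_k-1)\geq 0$, strictly positive for at least one term if the ratio is $>1$), which cannot be offset by the remaining terms — contradicting stationarity. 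Running this for the maximal ratio and then, by symmetry or by the same argument applied to $-x$, controlling the minimal ratio yields $x_{i_1}\cdots x_{i_d}/(y_{i_1}\cdots y_{i_d})\leqslant 1$ for all index tuples.

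The main obstacle is the bookkeeping in statement 2: unlike the differentiable case, at a Clarke stationary point the sign multipliers $\xi_{i_1\cdots i_d}$ on the zero-residual terms are free parameters in $[-1,1]$, so I cannot simply say "the sum is strictly increasing in $t$ hence has one root." I must instead isolate a sub-collection of terms whose signs are pinned by the maximality of $t_k$, show their contribution to the stationarity equation is bounded away from what the free terms can cancel, and handle the degenerate cases where many residuals vanish (where the argument collapses to the exact-recovery situation $x=\pm y$). Getting the sign/symmetry conventions right for even $d$ — where $x$ and $-x$ give the same $f_1$ — is the delicate point; everything else is a direct transcription of the Proposition \ref{prop:L1} computation with subgradient selections.
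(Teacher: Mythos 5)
Your treatment of statement 1 is essentially the paper's own argument and is fine. The problem is statement 2, which is the heart of the lemma, and there your central mechanism does not work. The $k$-th Clarke stationarity inclusion is
$0 \in \sum_{i_1,\hdots,i_{d-1}} x_{i_1}\cdots x_{i_{d-1}}\,\xi_{i_1\cdots i_{d-1}k}$ with $\xi_{i_1\cdots i_{d-1}k}\in \text{sign}(x_{i_1}\cdots x_{i_{d-1}}x_k - y_{i_1}\cdots y_{i_{d-1}}y_k)$, so the sign of each term is the sign of the \emph{coefficient} $x_{i_1}\cdots x_{i_{d-1}}$ times the multiplier: pinning the multiplier to $+1$ by maximality of $t_k=x_k/y_k$ does not make that term's contribution positive. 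Moreover, your quantitative claim (``a sum of squares times $(t_{i_1}\cdots t_{i_{d-1}}t_k-1)$, which cannot be offset by the remaining terms'') is the least-squares/$f_p$ expression, not the $\ell_1$ one: for $f_1$ each term has magnitude $|x_{i_1}\cdots x_{i_{d-1}}|$ regardless of how large the residual is, so there is no domination of the ``pinned'' terms over the others. Indeed, a single extremal equation cannot yield the contradiction: for $d=2$, at a hypothetical point with $t_k=2$ maximal and all other ratios $t_j\in(0,\tfrac12)$, the $k$-th inclusion reads $0\in 2|y_k| - \sum_{j\neq k}|y_j|t_j$, which can perfectly well hold when $n$ is large; the contradiction only emerges once you also invoke the inclusions at the other coordinates $j$ (which force $\sum_m |y_m|t_m=0$). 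Finally, for odd $d$ the violation $t_{i_1}\cdots t_{i_d}>1$ can come from mixed-sign tuples (e.g.\ two ratios far below $-1$), which the coordinate achieving $\max_i t_i$ never sees, and the reflection $x\mapsto -x$ you invoke is only a symmetry of $f_1$ when $d$ is even.

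What is missing is precisely the cross-coordinate structure the paper exploits. After dividing, the paper regards
$t \longmapsto \sum_{i_1,\hdots,i_{d-1}} |y_{i_1}\cdots y_{i_{d-1}}|\,\frac{x_{i_1}\cdots x_{i_{d-1}}}{y_{i_1}\cdots y_{i_{d-1}}}\,\text{sign}\!\left(\frac{x_{i_1}\cdots x_{i_{d-1}}}{y_{i_1}\cdots y_{i_{d-1}}}\,t-1\right)$
as a single increasing set-valued staircase (each term is the Clarke subdifferential of the convex function $|x_{i_1}\cdots x_{i_{d-1}}t-y_{i_1}\cdots y_{i_{d-1}}|$, so monotonicity holds irrespective of the coefficient's sign), of which \emph{every} ratio $x_1/y_1,\hdots,x_n/y_n$ is a root; hence all ratios lie on the same step, and their position relative to each jump point $y_{i_1}\cdots y_{i_{d-1}}/(x_{i_1}\cdots x_{i_{d-1}})$ is determined by a bootstrapping argument (multiplying the inequality by ratios of roots repeatedly generates new jump points, culminating in $y_i^{d-1}/x_i^{d-1}\leqslant x_i/y_i$ and a sign contradiction in the inclusion). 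That argument — all roots versus all jump points, iterated across coordinates — is what actually delivers $x_{i_1}\cdots x_{i_d}/(y_{i_1}\cdots y_{i_d})\leqslant 1$ for every tuple; your sketch replaces it with a single-equation sign-pinning step that is both incorrectly signed and not quantitatively closable, so the proposal as written does not prove statement 2.
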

\begin{proof}
Similar in spirit to the proof of Proposition \ref{prop:L1}, the ratios $t \in \{ x_1/y_1, \hdots, x_n / y_n\}$ for a first-order stationary point must all be the roots of an increasing (set-valued) ``staircase function".
We then obtain the results by analyzing the relation between the roots and the jump points of the staircase function.
See Appendix \ref{subsec:clarke} for the complete proof.
\end{proof}
Note that the above lemma only uses the first-order optimality condition (in the sense of Clarke derivative) without any direction of decent.

\begin{remark}
One cannot show that there are no spurious local minima with only the first-order optimality condition (in the Clarke derivative sense). In fact, any $x \in \mathbb{R}^n$ satisfying $\sum\limits_{i=1}^n |y_i | \frac{x_i}{y_i} = 0$ and 
$\frac{x_{i_1} \hdots x_{i_{d}}}{y_{i_1} \hdots y_{i_{d}}} \leqslant 1$ for all $i_1,\hdots,i_d \in \{1,\ldots, n\}$, is 
a first-order stationary point, but is not a local minimum.
\end{remark}

\begin{lemma}
\label{lemma:box} 
If $y_1 \hdots y_n \neq 0$, define the set
\begin{equation}
S := \left\{ x \in \mathbb{R}^n ~ \left| ~ \frac{x_{i_1} \hdots x_{i_{d}}}{y_{i_1} \hdots y_{i_{d}}} \leqslant 1~, ~~~~~~ \forall ~ i_1,\hdots,i_{d} \in \{1,\ldots, n\} \right. \right\} .
\end{equation}
Then, $f_{1} \in \mathcal{G}(S)$.
\end{lemma}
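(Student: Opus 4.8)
The plan is to rescale the variables so that the objective collapses to a constant minus a power of a \emph{linear} functional, after which the structure of the feasible set settles the question. Since $y_1\cdots y_n\neq 0$, the linear map $\varphi(x):=(x_1/y_1,\ldots,x_n/y_n)$ is a homeomorphism of $\mathbb{R}^n$ carrying $S$ onto $S_0:=\{u\in\mathbb{R}^n:\ u_{i_1}\cdots u_{i_d}\leqslant 1 \text{ for all } i_1,\ldots,i_d\}$. For $u\in S_0$ every summand of $f_1\circ\varphi^{-1}$ obeys $|u_{i_1}\cdots u_{i_d}-1|=1-u_{i_1}\cdots u_{i_d}$, so, writing $b_i:=|y_i|>0$, $B:=\sum_i b_i$ and $L(u):=\sum_i b_i u_i$, expanding the product of sums gives
$$ f_1\bigl(\varphi^{-1}(u)\bigr)=\sum_{i_1,\ldots,i_d}b_{i_1}\cdots b_{i_d}\,(1-u_{i_1}\cdots u_{i_d})=B^{d}-L(u)^{d}\qquad(u\in S_0). $$
By Proposition~\ref{prop:change} it suffices to prove that the polynomial $g:=B^{d}-L(\cdot)^{d}$ has no local minimum on $S_0$ that fails to be a global minimum. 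Taking all indices equal gives $u_i^{d}\leqslant 1$, hence $u_i\leqslant 1$, for each $u\in S_0$, and since $b_i>0$ this forces $L\leqslant B$ on $S_0$ with equality only at $u=\mathbf 1\in S_0$; thus $g\geqslant 0$ on $S_0$, with minimum $0$ attained exactly at $\mathbf 1$ when $d$ is odd and at $\pm\mathbf 1$ when $d$ is even. Note also that $S_0=[-1,1]^{n}$ for even $d$, since a product of numbers of modulus $\leqslant 1$ has modulus $\leqslant 1$.

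The core is the claim that \emph{every $u^{*}\in S_0$ with $u^{*}\neq\mathbf 1$ admits feasible points arbitrarily close to it at which $L$ is strictly larger.} Two cases. If $u^{*}$ has a negative coordinate, let $\widehat u$ be obtained from $u^{*}$ by replacing each negative entry $u^{*}_i$ by $u^{*}_i+\varepsilon$ and leaving the rest fixed. For $\varepsilon>0$ small, every modified entry keeps its negative sign while its modulus strictly decreases, so in any monomial $\widehat u_{i_1}\cdots\widehat u_{i_d}$ the sub-product of the modified factors keeps its sign and decreases in modulus while the sub-product of the remaining (nonnegative) factors is unchanged; hence the new value lies between $u^{*}_{i_1}\cdots u^{*}_{i_d}$ and $0$, in particular $\leqslant 1$, so $\widehat u\in S_0$, while $L(\widehat u)=L(u^{*})+\varepsilon\sum_{i:\,u^{*}_i<0}b_i>L(u^{*})$. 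If $u^{*}$ has no negative coordinate then $u^{*}\in[0,1]^{n}\subseteq S_0$ and, as $u^{*}\neq\mathbf 1$, some $j$ has $u^{*}_j<1$; then $\widehat u:=u^{*}+\varepsilon e_j$ with $\varepsilon>0$ small lies in $[0,1]^{n}\subseteq S_0$ and has $L(\widehat u)>L(u^{*})$. Since $\widehat u\to u^{*}$ as $\varepsilon\to 0$, the claim holds.

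Granting the claim, let $u^{*}\in S_0$ be a local minimum of $g$. If $d$ is odd, $s\mapsto B^{d}-s^{d}$ is strictly decreasing on $\mathbb{R}$, so a feasible perturbation that raises $L$ strictly lowers $g$; by the claim such a perturbation exists near $u^{*}$ unless $u^{*}=\mathbf 1$, the global minimizer. If $d$ is even, $g$ and $S_0=[-1,1]^{n}$ are invariant under $u\mapsto -u$, so we may assume $L(u^{*})\geqslant 0$; since $s\mapsto B^{d}-s^{d}$ is strictly decreasing on $[0,B]$ and $t\mapsto t^{d}$ is strictly increasing on $[0,\infty)$, a perturbation raising $L$ above $L(u^{*})$ strictly lowers $g$, so by the claim $u^{*}=\mathbf 1$ (or $-\mathbf 1$ in the symmetric case). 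Either way every local minimum of $g$ is a global one, hence $g\in\mathcal{G}(S_0)$ and, by Proposition~\ref{prop:change}, $f_1\in\mathcal{G}(S)$. (Equivalently one may strip the outer power with Proposition~\ref{prop:composition}: for even $d$ the reduced problem is a linear program over the convex cube $[-1,1]^{n}$, and for odd $d$ it becomes precisely the claim that $L$ has no spurious local maximum on $S_0$.)

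The step I expect to be the real obstacle is the feasibility verification in the claim when $u^{*}$ has negative coordinates: one must check that \emph{all} of the monomial constraints $u_{i_1}\cdots u_{i_d}\leqslant 1$ are preserved under the simultaneous perturbation of every negative coordinate, which rests on the elementary but easily mishandled fact that shrinking the moduli of the negative factors, with their signs fixed, moves each product monotonically toward $0$. The remaining bookkeeping — identifying $S_0$ with the cube $[-1,1]^{n}$ for even $d$ and tracking the correct monotone branch of $s\mapsto B^{d}-s^{d}$ — is routine.
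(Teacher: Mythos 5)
Your proof is correct, and its core is genuinely different from the paper's. You share the paper's first step: the change of variables $u=\varphi(x)=(x_1/y_1,\ldots,x_n/y_n)$ and the identity $f_1\circ\varphi^{-1}=B^d-L(u)^d$ on $S_0$ (the paper's $S'$), justified by Proposition \ref{prop:change}. From there the paper strips the outer power with Proposition \ref{prop:composition}, and then argues separately: for odd $d$ it verifies the Mangasarian--Fromovitz constraint qualification, writes the KKT conditions to show any local minimum of $-\sum_i|y_i|x_i$ on $S'$ lies in $[0,1]^n$, and finishes with Proposition \ref{prop:subset} and a linear program; for even $d$ it identifies $S'=[-1,1]^n$, splits by the sign of $\sum_i|y_i|x_i$, and invokes Proposition \ref{prop:cover} plus two linear programs. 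You instead prove a single elementary perturbation claim: every feasible $u^*\neq\mathbf 1$ admits nearby feasible points with strictly larger $L$, obtained either by nudging all negative coordinates toward $0$ (feasible because each monomial keeps its sign structure and shrinks in modulus, so the constraints $u_{i_1}\cdots u_{i_d}\leqslant 1$ persist) or by raising a coordinate below $1$ inside $[0,1]^n$; monotonicity of $s\mapsto B^d-s^d$ (on $\mathbb{R}$ for odd $d$, on $[0,\infty)$ for even $d$ after the $u\mapsto -u$ symmetry reduction) then kills every non-global local minimum. This buys a shorter, self-contained argument with no constraint qualification, no KKT system, and no case-splitting via Propositions \ref{prop:subset}/\ref{prop:cover}, and it treats odd and even $d$ almost uniformly; the paper's longer route has the expository purpose of exercising the calculus of global functions it develops. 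One small caveat: your closing parenthetical that for even $d$ the reduced problem ``is a linear program over $[-1,1]^n$'' is loose --- stripping the power there yields $-\bigl(\sum_i|y_i|x_i\bigr)^2$, which still needs the sign-split as in the paper --- but since it is an aside not used in your main argument, it does not affect correctness.
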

\begin{proof}
We provide a sketch here, and the complete proof is deferred to Appendix \ref{subsec:box}.
The objective function on $S$ is equal to $f_{1}(x) = \left( \sum\limits_{i=1}^n |y_i | \right)^d - \left( \sum\limits_{i=1}^n |y_i | \frac{x_i}{y_i} \right)^d$. 
Define the set $S' := \left\{ ~ x \in \mathbb{R}^n ~ \left| ~ x_{i_1} \hdots x_{i_d} \leqslant 1~, ~~~ \forall ~ i_1,\hdots,i_{d} \in \{1,\ldots, n\} \right. ~ \right\}$.
When $d$ is an odd number, the composition and change of variables properties of global functions (Propositions \ref{prop:composition} and \ref{prop:change}) imply that $f_{1}$ is a global function on $S$ if and only if $f_{\text{odd}}(x) = - \sum_{i=1}^n |y_i| x_i \in \mathcal{G}(S')$.
Similarly, when $d$ is an even number, $f$ is a global function if and only if $f_{\text{even}}(x) = - \left(\sum_{i=1}^n |y_i| x_i\right)^2 \in \mathcal{G}(S')$.
For the case when $d$ is odd, we apply the Karush-Kuhn-Tucker conditions to restrict attention to the positive orthant and conclude by showing its association with a linear program. 
For the case when $d$ is even, we divide the set $S'$ into two subsets: $S' \cap \{x| \sum_{i=1}^n |y_i| x_i \geq 0\}$ and $S' \cap \{x| \sum_{i=1}^n |y_i| x_i \leq 0\}$.
Observe that $f_{\text{even}}(x)$ is a global function on each of the subset by associating each subset with a linear program. 
Then, Proposition \ref{prop:subset} establishes the result.
\end{proof}

The two previous lemmas prove Proposition \ref{prop:L1_strong}; the notion of global function is used to the prove the latter.

\section{Conclusion}
\label{sec:Conclusion}
Nonconvex optimization appears in many applications, such as matrix completion/sensing, tensor recovery/decomposition, and training of neural networks. 
For a general nonconvex function, a local search algorithm may become stuck at a local minimum that is arbitrarily worse than a global minimum.
We develop a new notion of global functions for which all local minima are global minima.
Using certain properties of global functions, we show that the set of these functions include a class of nonconvex and nonsmooth functions that arise in matrix completion/sensing and tensor recovery/decomposition with Laplacian noise.
This paper offers a new mathematical technique for the analysis of nonconvex and nonsmooth functions such as those involving $\ell_1$ norm and $\ell_\infty$ norm.

\newpage
\bibliography{mybib}{}
\bibliographystyle{plain}

\section*{Acknowledgements}
Many thanks to Chris Dock for fruitful discussions.

\newpage
\section{Appendix}
\label{sec:Appendix}

\subsection{Experiment settings}
\label{subsec:experiments}

We use SGD to solve the problems \eqref{eq:L2noisy} and \eqref{eq:L1noisy} for randomly generated rank-one matrices.
In the experiments, each $y$ is generated according to the $n$-dimensional i.i.d. standard Gaussian distribution.
The positions of the sparse noise are uniformly selected from all the $n^2$ elements, and each noisy element is replaced by a Gaussian random variable with standard deviation $10$.
With regard to SGD, we set the learning rate to $0.001$ and momentum to $0.9$. The initial point is a Gaussian random vector.

In our experiments, a successful recovery means that the solution $x$ has a relative error less than $0.1$ compared with the optimal solution $y$.
We consider $n = 20$ and $n=50$ and vary the number of noisy elements from $0$ to $n^2$. For each case, we run $100$ experiments and report the successful recovery rate.
As shown in Figures \eqref{fig:n20} and \eqref{fig:n50}, the LS problem \eqref{eq:L2noisy} fails to recover the matrix except for the noiseless case. On the other hand, the LAV problem \eqref{eq:L2noisy} provides perfect recovery in the presence of sparse noise.

\subsection{Illustrations}
\label{subsec:eg}

This section is composed of Figure \ref{fig:uni}, Figure \ref{fig:uniform matrix}, and Figure \ref{fig:counter}.


\begin{figure}[!h]
\begin{subfigure}{.4\textwidth}
\centering
\includegraphics[width=6cm]{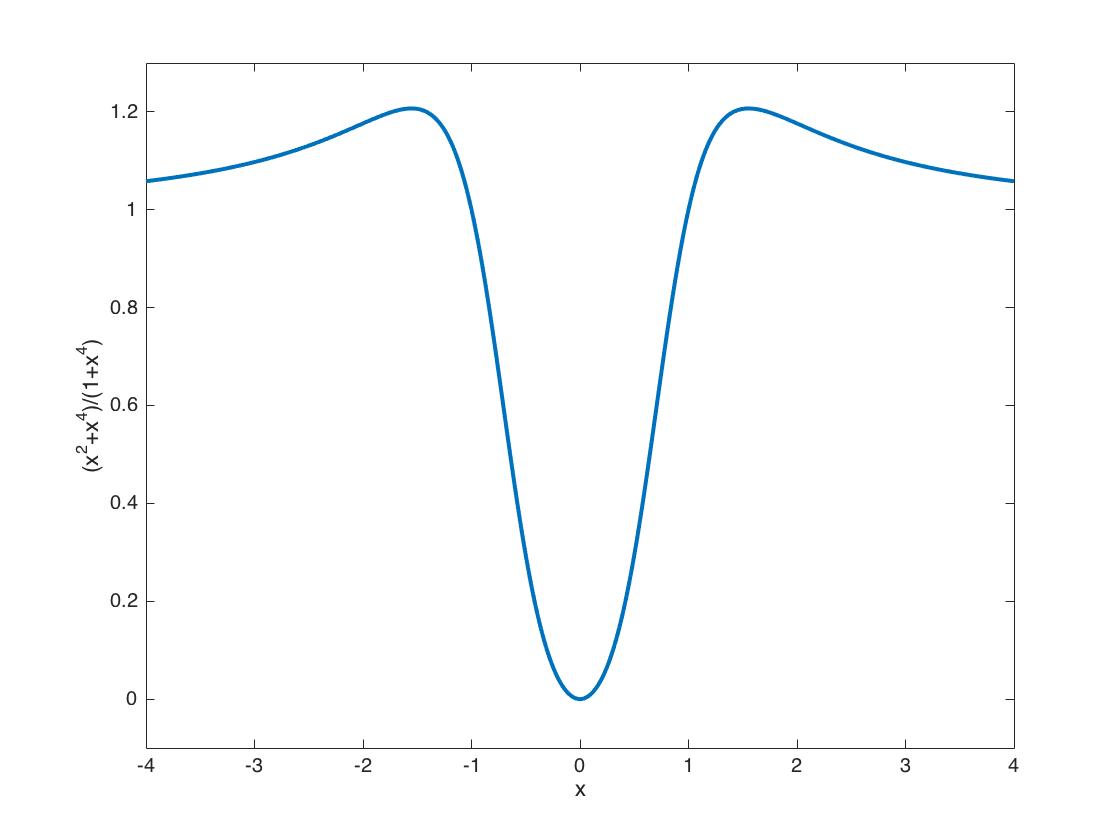}
	\caption{Global function on $\mathbb{R}$}
	 \label{fig:example}
\end{subfigure}
\begin{subfigure}{.9\textwidth}
\centering
\includegraphics[width=7cm]{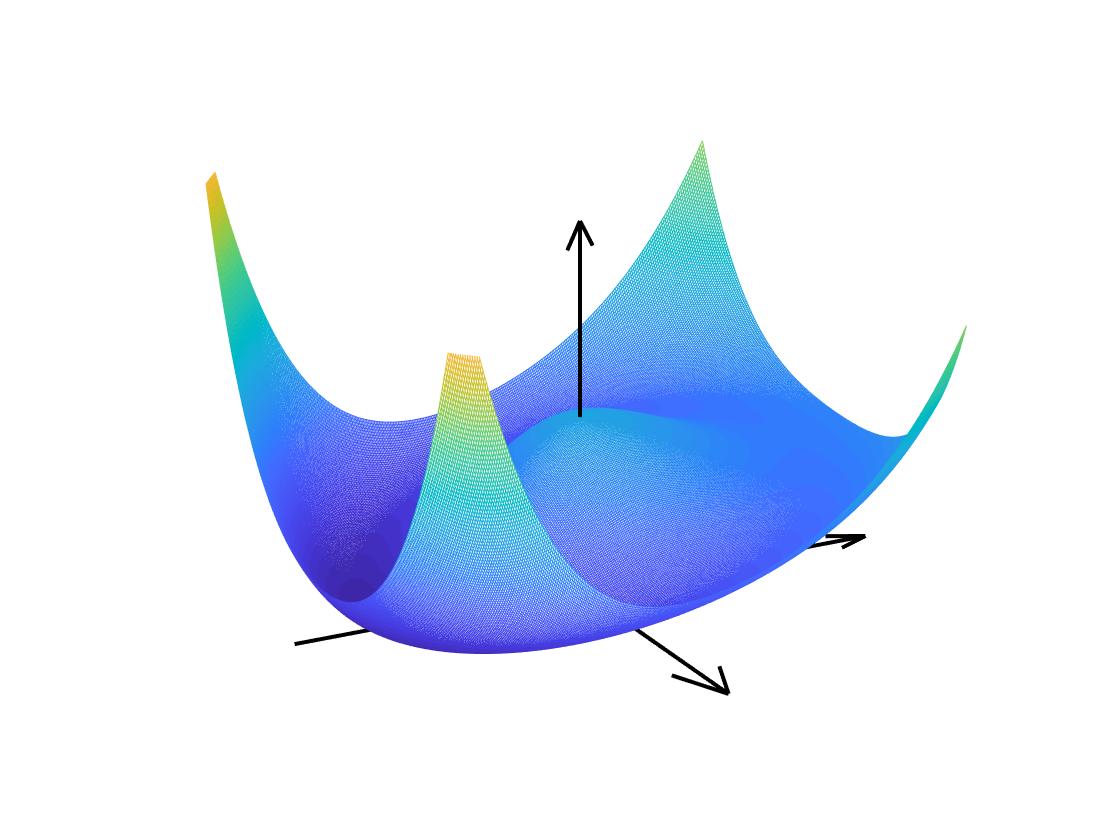}
	\caption{Global function on $\mathbb{R}^2$}
	 \label{fig:global}
\end{subfigure}
\caption{Examples of global functions}
\label{fig:uni}
\end{figure}

\begin{figure}[!h]
\begin{subfigure}{.4\textwidth}
\centering
\includegraphics[width=8cm]{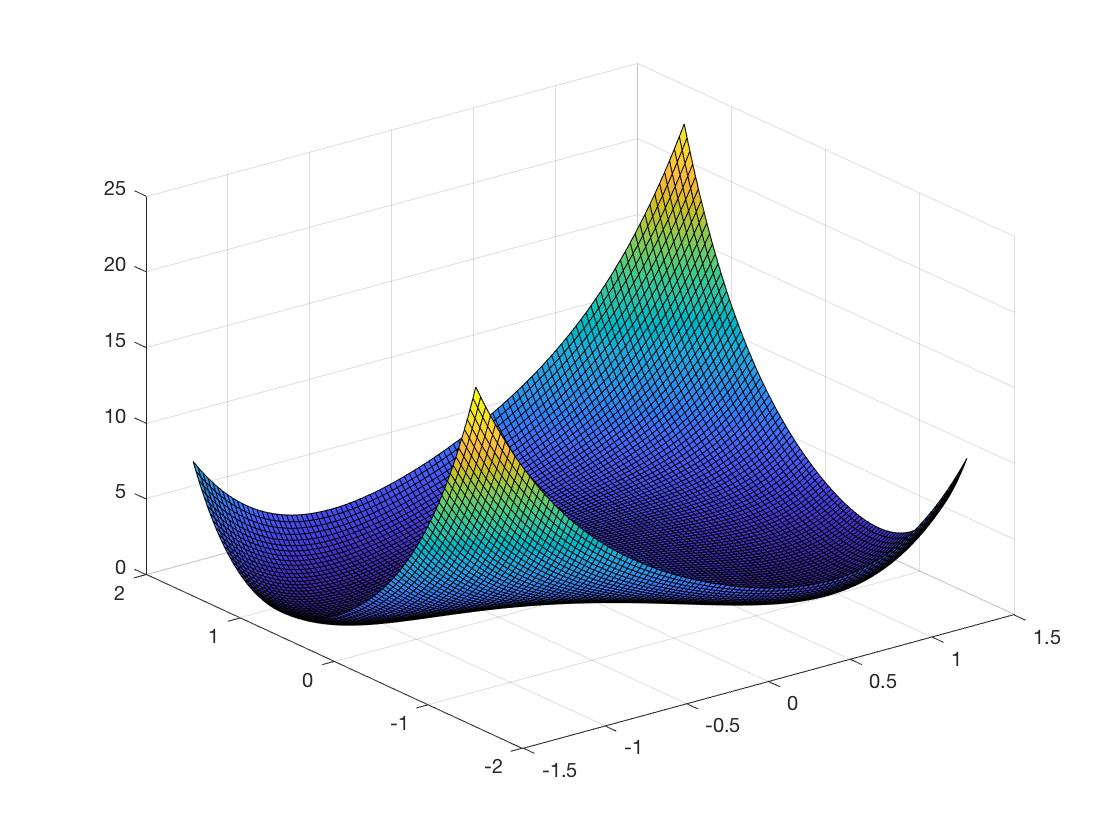}
\caption{$f_2$}
\end{subfigure}
\begin{subfigure}{.9\textwidth}
\centering
\includegraphics[width=8cm]{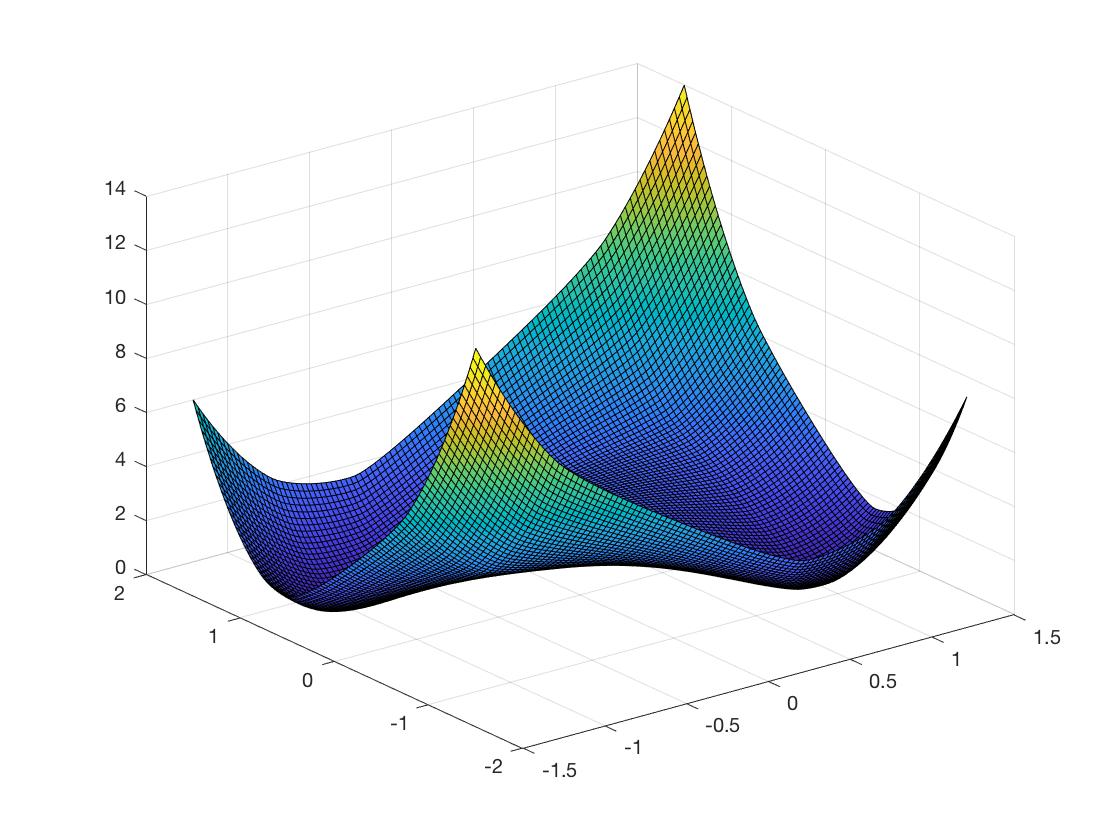}
\caption{$f_{1.5}$}
\end{subfigure}\par\medskip
\begin{subfigure}{.4\textwidth}
\centering
\includegraphics[width=8cm]{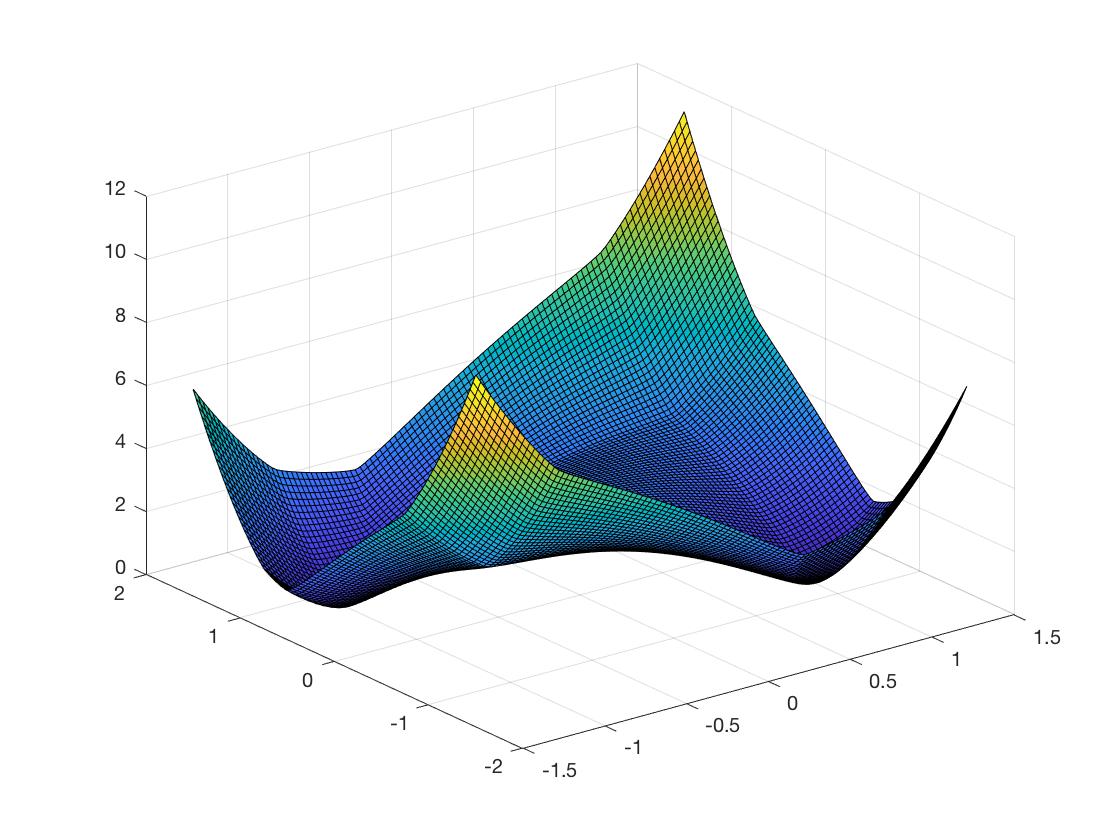}
\caption{$f_{1.25}$}
\end{subfigure}
\begin{subfigure}{.9\textwidth}
\centering
\includegraphics[width=8cm]{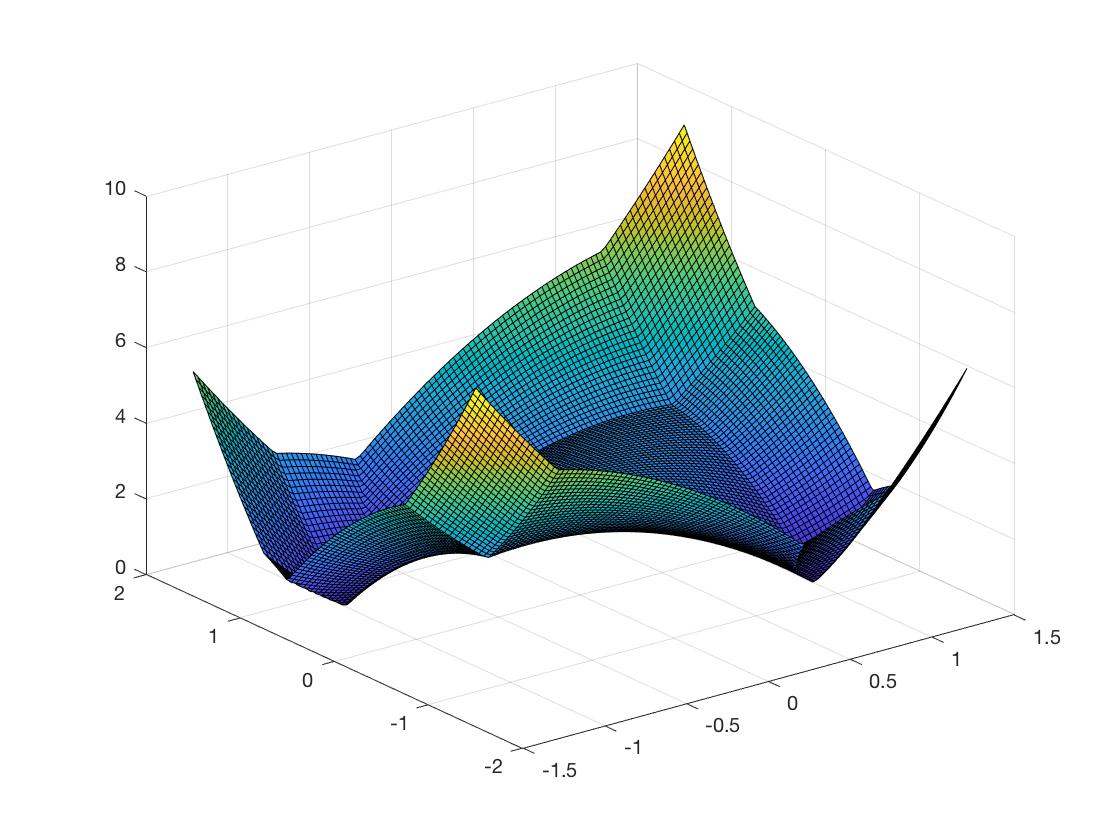}
\caption{$f_1$}
\end{subfigure}
\caption{Compact convergence of global functions implies that strict local minima are global}
\label{fig:uniform matrix}
\end{figure}

\begin{figure}[!h]
\begin{subfigure}{.4\textwidth}
\centering
\includegraphics[width=8cm]{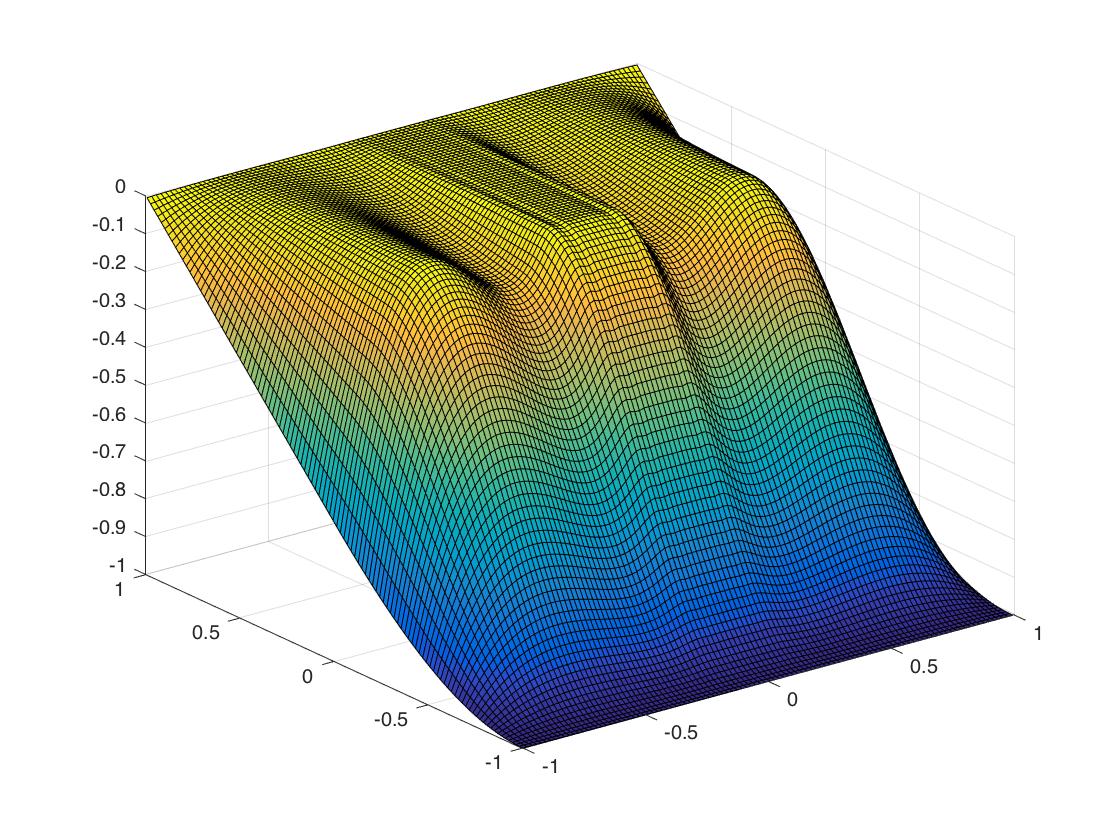}
\caption{
Global function devoid of a strictly decreasing path from $(0,1/2)$ to a global minimizer
}
	 \label{fig:nopath}
\end{subfigure}
\begin{subfigure}{.9\textwidth}
\centering
\includegraphics[width=8cm]{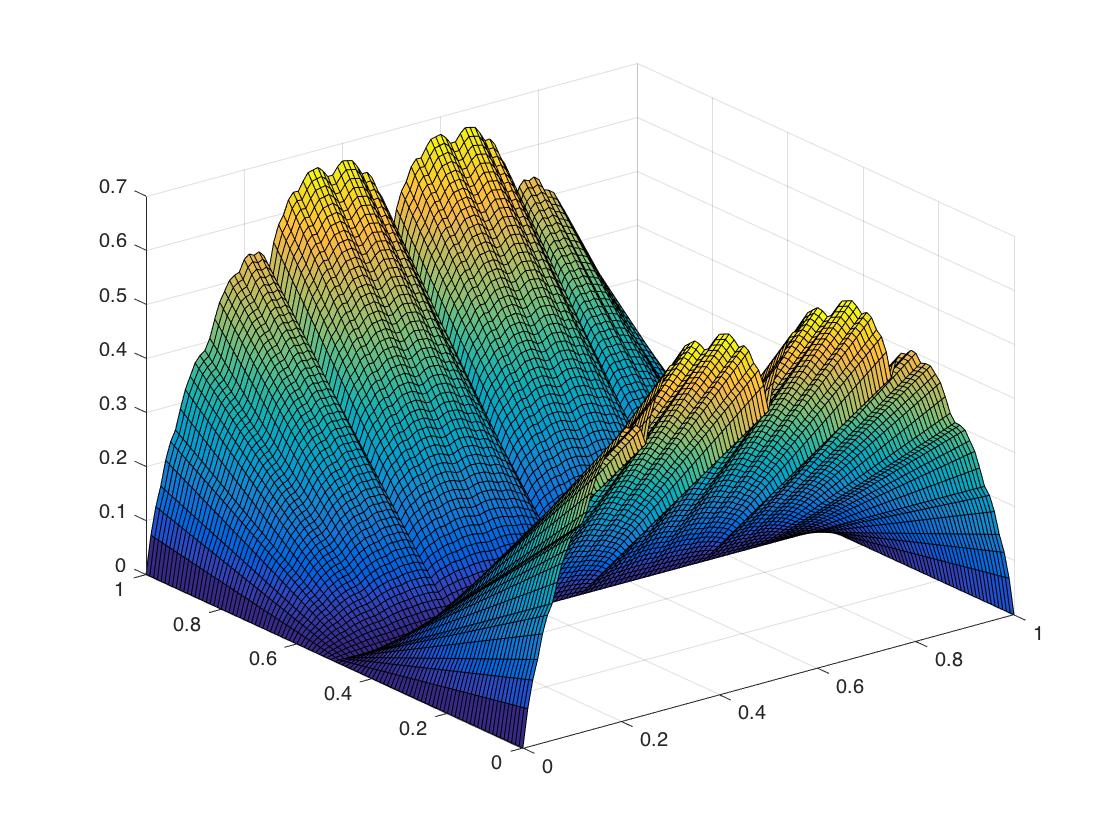}
\caption{Global function that is nowhere differentiable}
	 \label{fig:nodiff}
\end{subfigure}
\caption{Notable examples (with $x_1$-axis on the right and $x_2$-axis on the left)}
\label{fig:counter}
\end{figure}


\subsection{Proof of Proposition \ref{prop:composition}}
\label{subsec:composition}
($ \Longrightarrow $) Let $x \in S$ denote a local minimum of $\phi \circ f$. There exists $\epsilon > 0$ such that $\phi(f(x)) \leqslant \phi(f(y))$ for all $y \in S \setminus \{x\}$ with $\|x - y\|_2 \leqslant \epsilon$. Since $\phi$ is increasing, it holds that $f(x) \leqslant f(y)$. Since $f$ is global, we deduce that $x$ is a global minimum of $f$, that is to say $f(x) \leqslant f(y)$ for all $y \in S \setminus \{x\}$. Since $\phi$ is increasing, it holds that $\phi(f(x)) \leqslant \phi(f(y)) $ for all $y \in S \setminus \{x\}$. We conclude that $x$ is a global minimum of $\phi \circ f$.

($ \Longleftarrow $) Simply apply the previous argument to $\phi^{-1} \circ ( \phi \circ f)$, where $\phi^{-1}$ denotes the inverse of $\phi: f(S) \longrightarrow \phi \circ f(S)$.

\subsection{Proof of Proposition \ref{prop:change}}
\label{subsec:change}
($\Longrightarrow$) Let $x' \in S'$ denote a local minimum of $f \circ \varphi^{-1}$. There exists $\epsilon' > 0$ such that $f(\varphi^{-1}(x')) \leqslant f(\varphi^{-1}(y'))$ for all $y' \in S' \setminus \{x'\}$ with $\|x' - y'\|_2 \leqslant \epsilon'$. Since $\varphi$ is continuous, there exists $\epsilon>0$ such that $f(\varphi^{-1}(x')) \leqslant f(y)$ for all $y \in S \setminus \{\varphi^{-1}(x')\}$ with $\|\varphi^{-1}(x') - y\|_2 \leqslant \epsilon$. Hence, $\varphi^{-1}(x')$ is a local minimum of $f$. Since $f$ is global, it holds that $f(\varphi^{-1}(x')) \leqslant f(y)$ for all $y \in S$. Since $\varphi$ is a bijection, $f(\varphi^{-1}(x')) \leqslant f(\varphi^{-1}(y'))$ for all $y' \in S'$, implying that $x'$ is a global minimum of $f \circ \varphi^{-1}$. 

($ \Longleftarrow $) Simply apply the previous argument to $(f \circ \varphi^{-1}) \circ \varphi$.

\subsection{Proof of Proposition \ref{prop:definition}}
\label{subsec:definition}

One direction is obvious. For the other direction, we propose a proof by contrapositive. Let $X \subset S$ denote a local minimum that is not a global minimum. There exists $\epsilon > 0$ such that the uniform neighborhood $V := \{ y \in S ~|~ \exists\, x \in X: \|x-y\|_2 \leqslant \epsilon \}$ satisfies $f(x) \leqslant f(y)$ for all $x \in X$ and for all $y \in V \setminus X$. Also, there exists $z \in S \setminus V$ such that $f(z) < f(x)$ for all $x \in V$. Since $f$ is continuous on the compact set $V$, it attains a minimum $x' \in V$ such that $f(z) < f(x')$. If $x' \in X$, then for all $y \in S$ such that $\|x'-y\|_2 \leqslant \epsilon$, it holds that $f(z) < f(x') \leqslant f(y)$. Thus, $x'$ is local minimum that is not a global minimum. If $x' \in V\setminus X$, then $f(x') \leqslant f(x) \leqslant f(x')$ for all $x \in X$. Consider a point $x \in X$. For all $y \in S$ such that $\|x-y\|_2 \leqslant \epsilon$, it holds that $f(x) = f(y)$ if $y \in X$ and $f(x) \leqslant f(y)$ if $y \notin X$. Together with the fact that $f(z) < f(x') = f(x)$, we deduce that $x$ is a local minimum that is not a global minimum.

\subsection{Proof of Proposition \ref{prop:weakly global}}
\label{subsec:weakly global}
We propose a proof by contrapositive. Assume that $f$ is not constant on a local minimum $X \subset S$ that is not a global minimum. The minimum $X$ admits a uniform neighborhood $V$ such that $f(x) \leqslant f(y)$ for all $x \in X$ and for all $y \in V \setminus X$. Since $f$ is continuous on the compact set $V$, there exists $x' \in V$ such that $f(x') \leqslant f(x)$ for all $x \in V$. If $x' \in V \setminus \text{int}(X)$ where ``$\text{int}$'' stands for interior, then $f$ is constant on $X$ because $X$ is a local minimum. 
Therefore, $x' \in \text{int}(X)$ and $f(x') < f(x)$ for all $x \in \partial X := X \setminus \text{int}(X)$. Consider the compact set defined by $X' := \{ x \in X ~|~ f(x') = f(x) \}$. The set $V$ satisfies $f(x) < f(y)$ for all $x \in X'$ and $y \in V \setminus X'$. Since $X' \subset X$, there exists a uniform neighborhood $V'$ of $X'$ satisfying $f(x) < f(y)$ for all $x \in X'$ and for all $y \in V' \setminus X'$. Hence, $X'$ is a strict local minimum that is not global. To conclude, $f$ is not a weakly global function.

\subsection{Proof of Proposition \ref{prop:compact}}
\label{subsec:compact proof}
Consider a sequence of global functions $f_k$ that converge compactly towards $f$. Since $S \subset \mathbb{R}^n$ and $\mathbb{R}^n$ is a compactly generated space, it follows that $f$ is continuous. 
We proceed to prove that $f$ is a weakly global function by contradiction.
Suppose $X \subset S$ is a strict local minimum that is not global minimum. There exists $\epsilon > 0$ such that the uniform neighborhood $V := \{ y \in S ~|~ \exists\, x \in X: \|x-y\|_2 \leqslant \epsilon \}$ satisfies $f(x) < f(y)$ for all $x \in X$ and for all $y \in V \setminus X$. Since $f$ is continuous on the compact set $X$, it attains a minimal value on it, say $\inf_X f := \alpha + \inf_{S} f$ where $\alpha > 0$ since $X$ is not a global minimum. Consider a compact set $V \subset K \subset S$ such that $\inf_{K} f \leqslant \alpha/2 + \inf_{S} f$. Since $f$ is continuous on the compact set $\partial V$, it attains a minimal value on it, say $\inf_{\partial V} f := \beta + \inf_{X} f$ where $\beta > 0$ by strict optimality. Let $\gamma := \min \{ \alpha , \beta \}$. For a sufficiently large value of $k$, compact convergence implies that $|f_k(y)-f(y)| \leqslant \gamma /3$ for all $y \in K$. Since the function $f_k$ is compact on $V$, it attains a minimum, say $z \in V$. Therefore,
\begin{equation}
f_k(z) ~\leqslant~ \gamma/3 + \inf_{V} f ~\leqslant~ \beta/3 + \inf_{V} f ~<~ 2\beta/3 + \inf_{V} f
\end{equation}
\begin{equation}
\leqslant~ - \gamma/3 + \beta + \inf_V f ~\leqslant~ - \gamma/3 + \inf_{\partial V} f ~\leqslant~ \inf_{\partial V} f_k.
\end{equation}
Thus, $z \in \text{int}(V)$. We now proceed to show by contradiction that $z$ is a local minimum of $f_k$. Assume that for all $\epsilon' > 0$, there exists $y' \in S \setminus \{x\}$ satisfying $\|x - y' \|_2 \leqslant \epsilon'$ such that $f_k(z) > f_k(y')$. We can choose $\epsilon'$ small enough to guarantee that $y'$ belongs to $V$ since $z \in \text{int}(V)$. The point $y'$ then contradicts the minimality of $z$ on $V$. This means that $z \in V$ is a local minimum of $f_k$. Now, observe that 
\begin{equation}
\inf_K f_k ~\leqslant~ \gamma/3 + \inf_K f ~\leqslant~ \gamma/3 + \alpha/2 + \inf_S f ~\leqslant~ 2\alpha/3 + \inf_S f ~<~ 5\alpha/6 + \inf_S f 
\end{equation}
\begin{equation}
\leqslant~ \alpha - \gamma/3 + \inf_S f ~=~ - \gamma/3 + \inf_X f ~=~ -\gamma/3 + \inf_V f ~\leqslant~ \inf_V f_k ~\leqslant~ f_k(z).
\end{equation} 
Thus, $z$ is not a global minimum of $f_k$. This contradicts the fact that $f_k$ is a global function.

\subsection{Proof of Lemma \ref{lemma:clarke}}
\label{subsec:clarke}
Based on the Clarke derivative \cite{clarke1975,clarke1990} for locally Lipschitz functions, the first-order optimality condition reads
\begin{equation}
0 \in \sum\limits_{i_1,\hdots,i_{d-1}=1}^n x_{i_1} \hdots x_{i_{d-1}} ~ \text{sign}(x_{i_1} \hdots x_{i_{d-1}} x_i - y_{i_1} \hdots y_{i_d} y_i) ~~~,~~~ i = 1,\hdots,n
\end{equation}
where
\begin{equation}
\text{sign}(x) :=
\left\{
\begin{array}{cl}
-1 & \text{if} ~ x < 0, \\
\big[-1,1\big] & \text{if} ~ x = 0, \\
1 & \text{if} ~ x > 0.
\end{array}
\right.
\end{equation}
If $y_i = 0$ for some $i \in \{1,\ldots,n\}$, then the above equations readily yield 
\begin{equation}
0 \in \sum\limits_{i_1,\hdots,i_{d-1}=1}^n x_{i_1} \hdots x_{i_{d-1}} ~ \text{sign}(x_{i_1} \hdots x_{i_{d-1}} x_i)
=
\text{sign}( x_i)
 \sum\limits_{i_1,\hdots,i_{d-1}=1}^n |x_{i_1} \hdots x_{i_{d-1}}|
\end{equation}
which implies
$x_i = 0$. This reduces the dimension of the problem from $n$ to $n-1$, so without loss of generality we may assume that $y_i \neq 0$ for all $i =1,\hdots,n$. After a division, observe that the following inclusion is satisfied:
\begin{equation}
\label{eq:root}
0 \in \sum\limits_{i_1,\hdots,i_{d-1}=1}^n |y_{i_1} \hdots y_{i_{d-1}}| \frac{x_{i_1} \hdots x_{i_{d-1}}}{y_{i_1} \hdots y_{i_{d-1}}} \text{sign} \left( \frac{x_{i_1} \hdots x_{i_{d-1}}}{y_{i_1} \hdots y_{i_{d-1}}} t - 1 \right)
\end{equation}
for all $t \in \{ x_1/y_1, \hdots, x_n / y_n\}$. Each term with $x_{i_1} \hdots x_{i_{d-1}} \neq 0$ in the above sum is an increasing step (set-valued) function of $t \in \mathbb{R}$ since it is the Clarke derivative of the convex function 
\begin{equation}
g(t)= | x_{i_1} \hdots x_{i_{d-1}} t - y_{i_1} \hdots y_{i_{d-1}} |.
\end{equation}
The above sum is thus a increasing step function of $t \in \mathbb{R}$. Hence, the roots $x_1/y_1, \hdots, x_n / y_n$ all along belong to the same step. Jumps between the steps occur exactly at the following set of points:
\begin{equation}
\left\{ \left. \frac{y_{i_1} \hdots y_{i_{d-1}}}{x_{i_1} \hdots x_{i_{d-1}}} ~~~ \right| ~~~i_1,\hdots,i_{d-1} \in \{1,\ldots, n\} ~~\text{and}~~ x_{i_1} \hdots x_{i_{d-1}} \neq 0 \right\}
\end{equation}
This set is empty when $x=0$; otherwise, none of its elements are equal to zero because $y \neq 0$. Given a jump point $\alpha \neq 0$ in the above set, the roots must therefore be all before or all after, that is to say:
\begin{equation}
\frac{x_1}{y_1}, \hdots, \frac{x_n} {y_n} \leqslant \alpha ~~~~~~~~~\text{or}~~~~~~~~~ \alpha \leqslant \frac{x_1}{y_1}, \hdots, \frac{x_n} {y_n}.
\end{equation}
We next prove that 
\begin{equation}
\label{eq:implications}
\alpha > 0 ~\Longrightarrow~ \frac{x_1}{y_1}, \hdots, \frac{x_n} {y_n} \leqslant \alpha ~~~~~~~~~\text{and}~~~~~~~~~ \alpha < 0 ~\Longrightarrow~ \alpha \leqslant \frac{x_1}{y_1}, \hdots, \frac{x_n} {y_n}.
\end{equation}
Let us prove the first implication by contradiction. Assume that there exists $ k \in \{1,\ldots,n\}$ such that $\alpha < x_k/y_k$. Since one root is after the jump point $\alpha$, all other roots are after the jump point $\alpha$. In particular, for all $i \in \{1,\ldots,n\}$, we have
\begin{equation}
0 < \alpha :=\frac{y_{i_1} \hdots y_{i_{d-1}}}{x_{i_1} \hdots x_{i_{d-1}}} \leqslant \frac{x_{i}}{y_{i}}
\end{equation}
Therefore, all the roots are positive. By multiplying the above equation by the positive number $\frac{x_{i_1}y_{i}}{y_{i_1}x_i}$, we obtain
\begin{equation}
\label{eq:before_error}
\frac{y_{i} y_{i_2} \hdots y_{i_{d-1}}}{x_{i} x_{i_2} \hdots x_{i_{d-1}}} \leqslant \frac{x_{i_1}}{y_{i_1}}.
\end{equation}
Note that the left-hand side is a jump point, and the right-hand side is a root. Therefore, all the roots are after \textbf{(This is the error in the proof: a root must be strictly after a jump point so that all roots are after the jump point)}, and in particular:
\begin{equation}
\label{eq:after_error}
\frac{y_{i} y_{i_2} \hdots y_{i_{d-1}}}{x_{i} x_{i_2} \hdots x_{i_{d-1}}} \leqslant \frac{x_{i}}{y_{i}}.
\end{equation}
Again, since the roots are positive, by multiplying by $\frac{x_{i_2}y_{i}}{y_{i_2}x_i}$, we get
\begin{equation}
\frac{y_{i}^2 y_{i_3} \hdots y_{i_{d-1}}}{x_{i}^2 x_{i_3} \hdots x_{i_{d-1}}} \leqslant \frac{x_{i_2}}{y_{i_2}}.
\end{equation}
Similarly, the left-hand side is a jump point, and the right-hand side is a root. Thus, all the roots are after, and in particular:
\begin{equation}
\frac{y_{i}^2 y_{i_3} \hdots y_{i_{d-1}}}{x_{i}^2 x_{i_3} \hdots x_{i_{d-1}}} \leqslant \frac{x_{i}}{y_{i}}.
\end{equation}
Continuing this process, we ultimately obtain that 
\begin{equation}
\frac{y_i^{d-1}}{x_i^{d-1}} \leqslant \frac{x_i}{y_i}
\end{equation}
that is to say $1 \leqslant x_i / y_i$. If the inequality is an equality for all $i \in \{1,\ldots,n\}$, then $\alpha = 1 = x_k / y_k$, which is impossible since $\alpha < x_k / y_k$. Thus, there exists one root $t$ of \eqref{eq:root} that is strictly greater than one. But this implies that every term in the sum in \eqref{eq:root} is strictly positive, which is impossible. 
As a result, the first implication in \eqref{eq:implications} is true. 

We next prove the second implication in \eqref{eq:implications} by contradiction. Assume that there exists $ k \in \{1,\ldots,n\}$ such that $x_k/y_k < \alpha$. Since one root is before the jump point $\alpha$, all other roots are before the jump point $\alpha$.  In particular, for all $i \in \{1,\ldots,n\}$, we have 
\begin{equation}
\frac{x_{i}}{y_{i}} \leqslant  \frac{y_{i_1} \hdots y_{i_{d-1}}}{x_{i_1} \hdots x_{i_{d-1}}} := \alpha<0.
\end{equation}
Therefore, all the roots are negative. Since $\alpha < 0$ is the product of $d-1$ negative terms, it must be that $d$ is even.
Observe that $\frac{x_{i_1}y_{i}}{y_{i_1}x_i} > 0$ because it is a ratio of two roots.
Now, similar to the case $\alpha >0$, we obtain
\begin{equation}
\frac{x_{i_1}}{y_{i_1}} \leqslant \frac{y_{i} y_{i_2} \hdots y_{i_{d-1}}}{x_{i} x_{i_2} \hdots x_{i_{d-1}}}.
\end{equation}
The right-hand side is a jump point, and the left-hand side is a root. Thus, all the roots are before, and in particular:
\begin{equation}
\frac{x_{i}}{y_{i}} \leqslant \frac{y_{i} y_{i_2} \hdots y_{i_{d-1}}}{x_{i} x_{i_2} \hdots x_{i_{d-1}}}.
\end{equation}
Continuing this process (as in the case where $\alpha >0$), we ultimately obtain that 
\begin{equation}
\frac{x_i}{y_i} \leqslant 
\frac{y_i^{d-1}}{x_i^{d-1}} .
\end{equation}
Since $d$ is even and $x_i / y_i < 0$, this implies that $x_i / y_i \leqslant -1$. If the inequality is an equality for all $i \in \{1,\ldots,n\}$, then $\alpha = -1 = x_k / y_k$, which is impossible since $x_k / y_k < \alpha$. Thus, there exists one root $t$ of \eqref{eq:root} that is strictly less than $-1$. But this implies that every term in the sum in \eqref{eq:root} is strictly negative, which is impossible. Consequently, the second implication in \eqref{eq:implications} holds. 

Let us apply \eqref{eq:implications} to a root $x_{i_d}/y_{i_d}$ for some $i_d \in \{1,\ldots,n\}$: 
$$
\frac{y_{i_1} \hdots y_{i_{d-1}}}{x_{i_1} \hdots x_{i_{d-1}}} > 0 ~\Longrightarrow~ \frac{x_{i_d}}{y_{i_d}} \leqslant \frac{y_{i_1} \hdots y_{i_{d-1}}}{x_{i_1} \hdots x_{i_{d-1}}} ~~~~~~~~~\text{and}~~~~~~~~~ \frac{y_{i_1} \hdots y_{i_{d-1}}}{x_{i_1} \hdots x_{i_{d-1}}} < 0 ~\Longrightarrow~ \frac{y_{i_1} \hdots y_{i_{d-1}}}{x_{i_1} \hdots x_{i_{d-1}}} \leqslant \frac{x_{i_d}}{y_{i_d}}.
$$
In both cases we find that 
\begin{equation}
\frac{x_{i_1} \hdots x_{i_{d}}}{y_{i_1} \hdots y_{i_{d}}} \leqslant 1.
\label{eq:lessthanone}
\end{equation}
This inequality holds for all jump points (i.e. for all indices $i_1,\hdots,i_{d-1} \in \{1,\ldots, n\}$ such that $x_{i_1} \hdots x_{i_{d-1}} \neq 0$) and it is trivially true for all indices such that $x_{i_1} \hdots x_{i_{d-1}} = 0$. Therefore, \eqref{eq:lessthanone} is true for all $i_1,\hdots,i_{d} \in \{1,\ldots, n\}$, which completes the proof of this lemma. 

\subsection{Proof of Lemma \ref{lemma:box}}
\label{subsec:box}
When $x \in S$, notice that
\begin{equation}
\begin{array}{rcl}
f_1(x) & = & \sum\limits_{i_1,\hdots,i_d=1}^n |x_{i_1} \hdots x_{i_d} - y_{i_1} \hdots y_{i_d}| \\\\
& = & \sum\limits_{i_1,\hdots,i_d=1}^n |y_{i_1} \hdots y_{i_d}| \left|\frac{x_{i_1} \hdots x_{i_{d}}}{y_{i_1} \hdots y_{i_{d}}} - 1\right| \\\\
& = & \sum\limits_{i_1,\hdots,i_d=1}^n |y_{i_1} \hdots y_{i_d}| - |y_{i_1} \hdots y_{i_d}| \frac{x_{i_1} \hdots x_{i_{d}}}{y_{i_1} \hdots y_{i_{d}}}  \\\\
& = &  \left( \sum\limits_{i=1}^n |y_i | \right)^d - \left( \sum\limits_{i=1}^n |y_i | \frac{x_i}{y_i} \right)^d.
\end{array}
\end{equation}

Given $\alpha>0$, consider the function 
$\phi_\alpha:  f_1(S)  \longrightarrow  \mathbb{R}$
defined by
\begin{equation}
\phi_\alpha(t) = -\Bigg[ -t - \left( \sum\limits_{i=1}^n |y_i | \right)^d \Bigg]^\alpha.
\end{equation}
If $d$ is odd, then $\phi_\alpha$ is increasing when taking $\alpha = 1/d$. If $d$ is even, then $\phi_\alpha$ is increasing when $-t - ( \sum_{i=1}^n |y_i |)$ is positive and $\alpha = 2/d$.
Next, define the set 
\begin{equation}
S' := \left\{ ~ x \in \mathbb{R}^n ~ \left| ~ x_{i_1} \hdots x_{i_d} \leqslant 1~, ~~~ \forall ~ i_1,\hdots,i_{d} \in \{1,\ldots, n\} \right. ~ \right\}
\end{equation}
and consider the homeomorphism 
$\varphi:   S  \longrightarrow  S'$
defined by
\begin{equation}
\varphi(x) = \left( \frac{x_1}{y_1} , \hdots , \frac{x_n}{y_n} \right).
\end{equation}
According to Proposition \ref{prop:composition} and Proposition \ref{prop:change}, $f$ is a global function on $S$, i.e. $f_1 \in \mathcal{G}(S)$, if and only if $\phi_\alpha \circ f_1 \circ \varphi^{-1}$ is a global function on $S'$.
Thus, when $d$ is odd, $f_1 \in \mathcal{G}(S)$ if and only if
$f_{\text{odd}}(x) := \phi_{1/d} \circ f_1 \circ \varphi^{-1}(x)=- \sum_{i=1}^n |y_i| x_i \in \mathcal{G}(S')$.
When $d$ is even, $f_1 \in \mathcal{G}(S)$ if and only if
$f_{\text{even}}(x) := \phi_{2/d} \circ f_1 \circ \varphi^{-1}(x)=- \left( \sum_{i=1}^n |y_i| x_i \right)^2 \in \mathcal{G}(S')$.

Consider the case when $d$ is odd. 
For all $i_1,\hdots,i_{d} \in \{1,\ldots, n\}$, define the constraint function $g_{ i_1 , \hdots i_d}(x) := x_{i_1} \hdots x_{i_{d}} - 1$. If $x_1 \hdots x_n \neq 0$, then for any $i_1,\hdots,i_{d} \in \{1,\ldots, n\}$, it satisfies
\begin{equation}
\nabla ~ g_{ i_1 , \hdots i_d}(x) = 
\begin{pmatrix}
N(1,i_1 , \hdots i_d)/x_1 \\
\vdots \\
N(n,i_1 , \hdots i_d)/x_n
\end{pmatrix} 
x_{i_1} \hdots x_{i_{d}}
\end{equation}
where $\nabla ~ g_{ i_1 , \hdots i_d}(x)$ denotes the gradient of $g_{ i_1 , \hdots i_d}$ at $x$ and
$N(i,i_1,\hdots,i_d)$ denotes the number of indices among $i_1,\hdots,i_d$ that are equal to $i$. If the constraint $g_{ i_1 , \hdots i_d}(x) \leqslant 0$ is active, then
\begin{equation}
-x^T ~ \nabla  g_{ i_1 , \hdots i_d}(x) = - \sum\limits_{k=1}^n N(k,i_1 , \hdots i_d)  < 0.
\end{equation}
The Mangasarian-Fromovitz constraint qualification thus holds. 
A local minimum $x \in \mathbb{R}^n$ for the problem $\inf_{x\in S'} f_{\text{odd}}(x)$ must therefore satisfy the Karush-Kuhn-Tucker conditions:
\begin{equation}
\left\{
\begin{array}{l}
\sum\limits_{\tiny \begin{array}{c}i_1,\hdots,i_d=1 \\ N(i,i_1,\hdots,i_d) \neq 0 \end{array}}^n \lambda_{i_1,\hdots,i_d} N(i,i_1,\hdots,i_d) \frac{x_{i_1} \hdots x_{i_{d}}}{x_i} = 0 ~,~~~ \forall ~ i \in \{1,\ldots,n\}~, \\\\
x_{i_1} \hdots x_{i_{d}} \leqslant 1 ~,~~~ \forall ~ i_1,\hdots,i_{d} \in \{1,\ldots, n\}~,\\\\
\lambda_{i_1,\hdots,i_d} \geqslant 0~, \\\\
\lambda_{i_1,\hdots,i_d} ( x_{i_1} \hdots x_{i_{d}} - 1 ) = 0~.
\end{array}
\right.
\end{equation}
Here $\lambda_{i_1,\hdots,i_d} \geq 0, i_1,\hdots,i_{d} \in \{1,\ldots, n\},$ are the Lagrange multipliers.
If $x_i \neq 0$ for some $i \in \{1,\ldots,n\}$, then, by complementarity slackness, the first line yields
\begin{equation}
0 ~~~ < ~~~ |y_i| ~~~= \frac{1}{x_i} \sum\limits_{\tiny \begin{array}{c}\lambda_{i_1,\hdots,i_d}>0 \\ N(i,i_1,\hdots,i_d) \neq 0 \end{array}} \underbrace{\lambda_{i_1,\hdots,i_d} N(i,i_1,\hdots,i_d)}_{>0}
\end{equation}
which implies that $x_i > 0$. As a result, $x \geqslant 0$. Together with feasibility, it results that $0 \leqslant x_i^d \leqslant 1$, leading to the inequalities $0 \leqslant x_i \leqslant 1$ for all $i \in \{1,\ldots,n\}$.
Following Proposition \ref{prop:subset}, $f_{\text{odd}}$ is thus global on $S'$ if $f_{\text{odd}} \in \mathcal{G}(S'')$
where
\begin{equation}
S'' := \left\{ ~ x \in \mathbb{R}^n ~ \left| ~ 0 \leqslant x_i \leqslant 1~, ~~~ \forall ~ i \in \{1,\ldots, n\} \right. ~ \right\}.
\end{equation}
From the notion of global functions, $f_{\text{odd}} \in \mathcal{G}(S'')$ if the problem
\begin{equation}
\inf_{x\in \mathbb{R}^n} ~~~~~~ - \sum_{i=1}^n |y_i| x_i ~~~~~~~\text{subject to}~~~~~~ 0 \leqslant x_i \leqslant 1 ~,~~~~~~~~  \forall ~ i \in \{1,\ldots, n\}
\end{equation}
has no spurious local minima, which is obvious because the problem is a linear program.

Consider the case when $d$ is even. Since a feasible point $x \in S'$ satisfies $x_i^d \leqslant 1$, it must be that $-1 \leqslant x_i \leqslant 1$. Conversely, any point such that $-1 \leqslant x_i \leqslant 1$ belongs to $S'$. 
This implies that 
\begin{equation}
S' := \left\{ ~ x \in \mathbb{R}^n ~ \left| ~ -1 \leqslant x_i \leqslant 1~, ~~~ \forall ~ i \in \{1,\ldots, n\} \right. ~ \right\}.
\end{equation}
According to Proposition \ref{prop:cover}, $f_{\text{even}}(x) \in \mathcal{G}(S')$ if
$f_{\text{even}}(x)$ is a global function on both sets
$S' \cap \{x \in \mathbb{R}^n| \sum_{i=1}^n |y_i| x_i \geq 0\}$ and $S' \cap \{x \in \mathbb{R}^n| \sum_{i=1}^n |y_i| x_i \leq 0\}$, and $f_{\text{even}}(x)$ takes the same optimal value on both sets (the latter is obvious using symmetry).
Using Proposition \ref{prop:composition} again, we find that $f_{\text{even}}(x)$ is a global function on these two sets if and only if
\begin{equation}
-\sum_{i=1}^n |y_i| x_i \in \mathcal{G}\left(S' \cap \left\{x \in \mathbb{R}^n| \sum_{i=1}^n |y_i| x_i \geq 0\right\}\right)
\end{equation}
and 
\begin{equation}
\sum_{i=1}^n |y_i| x_i \in \mathcal{G}\left(S' \cap \left\{x \in \mathbb{R}^n| \sum_{i=1}^n |y_i| x_i \leq 0  \right\}\right),
\end{equation}
which are true because they are associated with the following linear programs:
\begin{equation}
\inf_{x\in \mathbb{R}^n} ~~~ - \sum_{i=1}^n |y_i| x_i ~~~~~~\text{subject to}~~~~~~ \left\{
\begin{array}{l}
-1 \leqslant x_i \leqslant 1 ~,~~~~~~~~  \forall ~ i \in \{1,\ldots, n\} \\\\
\sum\limits_{i=1}^n |y_i| x_i \geqslant 0.
\end{array}
\right.
\end{equation}
and 
\begin{equation}
\inf_{x\in \mathbb{R}^n} ~~~ \hphantom{-} \sum_{i=1}^n |y_i| x_i ~~~~~~\text{subject to}~~~~~~ \left\{
\begin{array}{l}
-1 \leqslant x_i \leqslant 1 ~,~~~~~~~~  \forall ~ i \in \{1,\ldots, n\} \\\\
\sum\limits_{i=1}^n |y_i| x_i \leqslant 0.
\end{array}
\right.
\end{equation}

\end{document}